\newcommand{\dd}{\mathrm{d}}
\DeclareMathOperator{\var}{Var}
\let\originallesssim\lesssim
\let\originalgtrsim\gtrsim
\DeclareRobustCommand{\lesssim}{%
  \mathrel{\mathpalette\lowersim\originallesssim}%
}
\DeclareRobustCommand{\gtrsim}{%
  \mathrel{\mathpalette\lowersim\originalgtrsim}%
}
\newcommand{\lowersim}[2]{%
  \sbox\z@{$#1<$}%
  \raisebox{-\dimexpr\height-\ht\z@}{$\m@th#1#2$}%
}
\newtheorem{thm}{Theorem}[section]
\newtheorem{lem}[thm]{Lemma}
\newtheorem{prop}[thm]{Proposition}
\newtheorem{defn}[thm]{Definition}
\newtheorem{cor}[thm]{Corollary}
\newcommand\independent{\protect\mathpalette{\protect\independent}{\perp}} 
\def\independent#1#2{\mathrel{\rlap{$#1#2$}\mkern2mu{#1#2}}}
\newcommand{\mc}[1]{\mathcal{{#1}}}
\newcommand{\supp}{\mathrm{Supp}}
\newcommand{\R}{\mathbb{R}}
\newcommand{\mE}{\mathbb{E}}
\newcommand{\e}{\varepsilon}
\def\Var{{\rm Var}}
\def\phi{\varphi}
\def\bee{\begin{eqnarray*}}
\def\ene{\end{eqnarray*}}
\begin{document}

\title{Working File}

\title{Minimum entropy of a log-concave variable  with fixed variance}

\author{James Melbourne, Piotr Nayar,  and Cyril Roberto}

\address{Department of Probability and Statistics, Centro de Investigación en
matemáticas (CIMAT)}
\email{james.melbourne@cimat.mx}

\address{University of Warsaw, 02-097 Warsaw, Poland}
\email{nayar@mimuw.edu.pl}

\address{MODAL'X, UPL, Univ. Paris Nanterre, CNRS, F92000 Nanterre France}
\email{croberto@math.cnrs.fr}

\thanks{P.N.’s research was supported by the National Science
Centre, Poland, grant 2018/31/D/ST1/01355. C.R.'s research was supported by the Labex MME-DII funded by ANR, reference ANR-11-LBX-0023-01
and the FP2M federation (CNRS FR 2036).}

\begin{abstract}
We show that for log-concave real random variables with fixed variance the Shannon differential entropy is minimized for an exponential random variable. We apply this result to derive upper bounds on capacities of additive noise channels with log-concave noise. We also improve constants in the  reverse entropy power inequalities for log-concave random variables.  
\end{abstract}

\keywords{entropy, variance, log-concave random variables}

\subjclass{94A17 (Primary), 60E15 (Secondary)}

\maketitle

\section{Introduction}

For a real random variable  $X$ with density $f$ its  differential entropy is defined via the formula $h(X)=h(f)=-\int f \log f$. This definition goes back to the celebrated work of Shannon \cite{Sha48}, but the same quantity was also considered, without the minus sign, by physicists, including Boltzmann, in the context of thermodynamics of gases. In fact, it is a classical fact going back to Boltzmann \cite{Bolt:lectures} that under fixed variance the entropy is maximized for a Gaussian random variable. This leads to the translation and scale invariant inequality 
\[
    h(X) \leq \frac12 \log \var(X) + \frac12 \log(2\pi e). 
\]
One can see that in general one cannot hope for a reverse bound, since for the density $f_\e(x)=(2\e)^{-1} \mathbf{1}_{[1,1+\e]}(|x|)$ the variance stays bounded while the entropy goes to $-\infty$ as $\e \to 0^+$. However, a reverse bound still holds if one imposes some extra assumption on $X$, such as log-concavity. Recall that $X$ is said to be log-concave if its density  is of the form $f=e^{-V}$, where $V:\R \to (-\infty,\infty]$ is convex.  In \cite{BM11:it, marsiglietti2018lower} the authors showed that indeed in this class the inequality can be reversed. In particular, in \cite{marsiglietti2018lower} it was proved that if $X$ is log-concave, then $h(X) \geq \frac12 \log \var(X)+\log 2$. The constant $\log 2$ is suboptimal and it became a well-known open problem to find the optimal bound. The goal of this article is to prove the following optimal inequality.   

\begin{thm}\label{thm:main}
    For a log-concave random variable $X$ we have
    \[
        h(X) \geq  \frac12 \log \var(X) +1
    \]
with equality for the standard one-sided exponential random variable with density $e^{-x}\mathbf{1}_{[0,\infty)}(x)$.
\end{thm}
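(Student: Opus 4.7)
The plan is to use invariance to normalize, then reduce to a two-parameter family of piecewise-exponential densities, and finally conclude by direct computation. Since $h(aX+b) = h(X) + \log|a|$ and $\var(aX+b) = a^2 \var(X)$, the desired inequality is translation- and positive-scale invariant, so I would first reduce to the case $\mathbb{E}[X] = 0$ and $\var(X) = 1$; the goal then becomes $h(X) \geq 1$, with equality expected for the centered one-sided exponential.

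The central step is to show that the infimum of $h$ over log-concave densities with unit variance is attained (at least in a limiting sense) within the \emph{tent} family
\[
f_{a,c,b}(x) = \frac{ac}{a+c}\exp\!\bigl(-a(x-b)_+ - c(b-x)_+\bigr), \qquad a, c > 0,\ b \in \R,
\]
whose convex potential $V = -\log f$ is piecewise affine with two pieces. The heuristic motivation: writing $f = e^{-V}$ and applying formal Lagrange multipliers to the minimization of $h(f) = \int V f$ subject to $\int f = 1$, $\int x f = 0$, $\int x^2 f = 1$, the interior Euler--Lagrange equation (on the strictly convex region of $V$) selects $V$ quadratic, i.e., the Gaussian; but the Gaussian \emph{maximizes} entropy under a variance constraint. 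Hence any minimizer must lie on the boundary of the log-concavity constraint, where $V$ is affine on intervals of positive measure, and the moment constraints then restrict $V$ to be piecewise affine with at most two pieces --- precisely the tent family, with the one-sided exponentials appearing as the degenerate limits $a \to \infty$ or $c \to \infty$.

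For the tent density, direct integration gives $a\,\mathbb{E}[(X-b)_+] = c/(a+c)$ and $c\,\mathbb{E}[(b-X)_+] = a/(a+c)$, hence $\mathbb{E}[V(X)] = 1$, so
\[
h(f_{a,c,b}) = 1 + \log\!\Bigl(\tfrac{1}{a} + \tfrac{1}{c}\Bigr), \qquad \var(f_{a,c,b}) = \tfrac{1}{a^2} + \tfrac{1}{c^2}.
\]
The claimed inequality $h(X) - \tfrac{1}{2}\log\var(X) \geq 1$ then reduces on this family to $(1/a + 1/c)^2 \geq 1/a^2 + 1/c^2$, i.e., $2/(ac) \geq 0$, which is immediate; equality holds exactly in the limit $a \to \infty$ or $c \to \infty$, recovering the one-sided exponential.

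The main obstacle I anticipate is the rigorous justification of the reduction to the tent family. The set of log-concave densities is convex only in the potential $V$, not in the density itself, and the moment and normalization constraints are nonlinear in $V$, so there is no direct convex-geometric ``extreme-point equals tent'' argument. The natural approach is a localization/perturbation argument in the spirit of Fradelizi--Guédon: establish existence of a minimizer via compactness for log-concave densities with bounded variance, then perturb $V$ by a compactly supported convex function (corrected by an affine term to preserve the moment constraints) and argue that $V$ cannot be strictly convex on any nondegenerate interval. Handling the degenerate boundary limits (one slope tending to infinity) and implementing the perturbation cleanly in the presence of the convexity constraint is where the real technical work lies.
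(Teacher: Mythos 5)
Your computation on the tent family is correct ($h = 1 + \log(1/a+1/c)$, $\var = 1/a^2 + 1/c^2$, and $(1/a+1/c)^2 \geq 1/a^2+1/c^2$ with equality only in the one-sided limit), and your instinct that a Fradelizi--Gu\'edon-type localization is the right tool for the reduction is also correct. But the reduction you propose does not land where you claim it does, and this is not merely a technicality to be cleaned up --- it is the point at which the actual difficulty of the problem lives. The extreme points of the set of log-concave probability measures on a compact interval satisfying two moment constraints are (by Fradelizi--Gu\'edon) densities of the form $e^{-\max\{\phi_1,\phi_2\}}$ \emph{restricted to a compact interval}, i.e.\ truncated two-piece log-affine densities that may jump to a positive value at the endpoints of their support. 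These are not tents: the support endpoints are genuine extra degrees of freedom (the uniform density on an interval is already an example of a two-piece log-affine extreme point that is not a tent nor a limit of tents with comparable entropy--variance ratio), and your degrees-of-freedom count ("three moment constraints, hence at most two affine pieces") implicitly ignores them. After normalization one is left with a genuinely three-parameter family of truncated candidates, and on that family the inequality is no longer the one-line observation $2/(ac)\geq 0$; in the paper it becomes the inequality $G(c,x,y)\geq 0$ for $x\geq 0$, $y\leq 0$, $c\geq 1$, whose verification (via a controlled lower bound on the exponential factor and term-by-term positivity of the Taylor coefficients of five auxiliary polynomials $P_0,\dots,P_4$) occupies the entire second half of the argument.

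So the gap is concrete: you have verified the inequality on a two-parameter subfamily on which it is trivial, but the localization argument you invoke to justify restricting to that subfamily actually produces a strictly larger class of candidates (compactly supported, truncated, possibly discontinuous at the boundary of the support), and the inequality must be proved on all of them. The paper also inserts a decreasing-rearrangement step before localizing (rearrangement preserves entropy and, for log-concave densities, increases variance), which reduces to \emph{monotone} truncated two-piece exponentials and is what makes the final three-parameter computation tractable; without some such additional reduction your perturbation argument would leave you with an even larger family. As written, your proposal proves the theorem only for the tent densities and their limits, not for general log-concave $X$.
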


\noindent Probably the most significant generalization of entropy is the so-called  R\'enyi entropy of order $\alpha \in (0,\infty) \setminus \{1\}$ \cite{Ren61}, which is defined as 
\[
	h_\alpha(X)=h_\alpha(f) = \frac{1}{1-\alpha} \log\left( \int f^\alpha(x) \dd x \right),
\]  
assuming that the integral converges, see \cite{madiman2017forward} for more background. If $\alpha \to 1$ one recovers the usual Shannon differential entropy $h(f)=h_1(f)=-\int f \ln f$. Also, by taking limits one can put $h_0(f)=\log|\supp f|$, where $\supp f$ stand for the support of $f$ and $h_\infty(f)=-\log\|f\|_\infty$, where $\|f\|_\infty$ is the essential supremum of $f$. For $p \geq q>0$ one has
\[
    0 \leq h_q(f) - h_p(f) \leq \frac{\log q}{q-1} - \frac{\log p}{p-1},
\]
with equality in the right hand inequality for the standard one-sided exponential random variable. Here the fraction $\frac{\log t}{t-1}$ is interpreted as $1$ for $t=1$, see \cite{FMW16}. Thus using this bound together with Theorem \ref{thm:main} gives the following corollary.

\begin{cor}
For $\alpha>1$ and a log-concave random variable $X$ one has     
\[
 h_\alpha(X) \geq  \frac12 \log \var(X) + \frac{\log \alpha}{\alpha-1},
\]
with equality when $X$ is an exponential random variable.
\end{cor}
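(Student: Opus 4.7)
The plan is to simply chain together the two inequalities displayed immediately before the corollary. The estimate
\[
0 \leq h_q(f) - h_p(f) \leq \frac{\log q}{q-1} - \frac{\log p}{p-1}
\]
for $p \geq q > 0$ is quoted from \cite{FMW16}, and Theorem \ref{thm:main} is the main result of the paper; together they yield the claim after a one-line arithmetic manipulation.

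Concretely, I would specialize the Rényi monotonicity inequality to $q=1$ and $p=\alpha$, taking advantage of the convention $\frac{\log t}{t-1}\big|_{t=1}=1$. This gives
\[
h(X)-h_\alpha(X) \;\leq\; 1 - \frac{\log \alpha}{\alpha-1},
\]
or equivalently $h_\alpha(X) \geq h(X) - 1 + \frac{\log \alpha}{\alpha-1}$. Plugging in the bound $h(X) \geq \frac12 \log \var(X) + 1$ from Theorem \ref{thm:main} cancels the $+1$ and $-1$ terms, leaving exactly $h_\alpha(X) \geq \frac12\log \var(X) + \frac{\log \alpha}{\alpha-1}$.

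For the equality case, the exponential random variable saturates Theorem \ref{thm:main} (as stated there), and is also the equality case in the right-hand Rényi inequality (as noted in the excerpt). Hence taking $X$ to be a multiple of the standard one-sided exponential preserves equality throughout the chain, since both $h$ and $h_\alpha$ transform the same way under scaling (adding $\log c$) and $\var$ scales by $c^2$, so the scale-invariant inequality holds with equality for every exponential random variable. There is no real obstacle; the only thing to be slightly careful about is the boundary convention $\frac{\log 1}{1-1}=1$, which ensures the cancellation works cleanly and that the argument extends to $\alpha \to 1^+$ recovering Theorem \ref{thm:main} itself.
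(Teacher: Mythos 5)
Your proof is correct and is exactly the argument the paper intends: specialize the quoted R\'enyi monotonicity bound to $q=1$, $p=\alpha$ (using the convention $\frac{\log t}{t-1}\big|_{t=1}=1$), combine with Theorem \ref{thm:main}, and note that the one-sided exponential saturates both inequalities. Your remarks on scale invariance and the $\alpha\to 1^+$ limit are accurate but not needed beyond what the paper already states.
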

\noindent Let us mention that the problem of minimizing R\'enyi entropy under fixed variance for \emph{symmetric} log-concave random variables was solved in \cite{MNT21} for the case $\alpha \leq 1$ and in \cite{bialobrzeski2021r} for $\alpha>1$. Here the worst case is the uniform random variable on an interval for $\alpha \leq \alpha^*$ and symmetric exponential random variable for $\alpha > \alpha^*$, where $\alpha^* \approx 1.241$ is the solution to the equation $\frac{\log \alpha^* }{\alpha^*-1} = \frac12 \log 6$.


\section{Applications}

\subsection{Additive noise channels} We now briefly discuss an application of our main result in the context of information theory. For more details we refer the reader to \cite{madiman2021sharp}, where the case of symmetric log-concave random variables was discussed. Consider the memoryless transmission channel with power budged $P$ subject to additive noise $N$, that is, if the input of the channel is $X$, then output produced by the channel is $Y = X +N$,
where $N$ is the noise independent of $X$. Shannon's celebrated channel coding theorem \cite{Sha48} asserts that the so-called \emph{capacity} of such a channel is given by the formula
\[
	C_P(N) = \sup_{X: \ \var(X) \leq P} (h(X+N) - h(N)).
\]  
We have the following fact.
\begin{prop}
    Let $N$ be a random variable with finite variance and let $Z$ be a centered Gaussian random variable with the same variance. Then
\[
    C_P(Z) \leq C_P(N) \leq C_P(Z)+ D(N),
\]
where $D(N)=h(Z)-h(N)$ is the relative entropy of $N$ from Gaussianity.
\end{prop}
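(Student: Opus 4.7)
The plan is to derive both inequalities from two classical tools only: the Gaussian maximum entropy bound $h(Y)\le \frac12\log(2\pi e\var(Y))$ and the Shannon--Stam entropy power inequality (EPI). Since channel capacity is invariant under deterministic shifts of the noise (a constant may be absorbed into the input), I may assume $N$ and $Z$ are both centered. Writing $\sigma^2=\var(Z)=\var(N)$, a direct computation using Gaussian maximum entropy (attained when the input is centered Gaussian with variance exactly $P$) gives the baseline formula $C_P(Z)=\tfrac{1}{2}\log(1+P/\sigma^2)$, together with $h(Z)=\tfrac{1}{2}\log(2\pi e\sigma^2)$.

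For the upper bound $C_P(N)\le C_P(Z)+D(N)$, fix any $X$ with $\var(X)\le P$ independent of $N$. Then $\var(X+N)\le P+\sigma^2$, so Gaussian maximum entropy yields
\[
h(X+N)\le \tfrac{1}{2}\log\bigl(2\pi e(P+\sigma^2)\bigr)=h(Z)+C_P(Z).
\]
Subtracting $h(N)$ and taking the supremum over admissible $X$ gives $C_P(N)\le C_P(Z)+(h(Z)-h(N))=C_P(Z)+D(N)$.

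For the lower bound $C_P(Z)\le C_P(N)$, use as input a centered Gaussian $X_G$ with variance exactly $P$, which is a feasible, though generally suboptimal, choice; hence $C_P(N)\ge h(X_G+N)-h(N)$. The EPI applied to the independent variables $X_G$ and $N$ gives $e^{2h(X_G+N)}\ge e^{2h(X_G)}+e^{2h(N)}=2\pi eP+e^{2h(N)}$, whence
\[
h(X_G+N)-h(N)\ge \tfrac{1}{2}\log\!\left(1+\frac{2\pi eP}{e^{2h(N)}}\right).
\]
A second application of Gaussian maximum entropy, now to $N$, yields $e^{2h(N)}\le 2\pi e\sigma^2$, so the right-hand side is bounded below by $\tfrac{1}{2}\log(1+P/\sigma^2)=C_P(Z)$, as desired.

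No step poses a serious obstacle: the whole argument reduces to one invocation of the EPI and two invocations of Gaussian maximum entropy. The only minor bookkeeping point is the initial reduction to centered $N$ and $Z$, which is what makes the quantity $D(N)=h(Z)-h(N)$ coincide with the relative entropy $D(N\Vert Z)$ and thus guarantees $D(N)\ge 0$.
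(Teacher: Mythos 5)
Your proof is correct and follows essentially the same route as the paper: the paper simply invokes Shannon's classical capacity bounds $\frac12\log(1+P/\mc{N}(N))\le C_P(N)\le\frac12\log((P+P_N)/\mc{N}(N))$ from \cite{Sha48} and combines them, and your argument amounts to re-deriving exactly those two bounds from the EPI (for the lower one) and Gaussian maximum entropy (for the upper one) before combining them in the same way. The only difference is that your write-up is self-contained rather than citing Shannon's Theorem 18.
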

\noindent Our Theorem \ref{thm:main} gives
\[
   D(N) =  h(Z)-h(N) \leq h(Z) - \frac12 \log \var(N) -1 = \frac12 \log(2\pi e) -1 = \frac12 \log \left( \frac{2\pi}{e} \right).
\]
We can therefore formulate the following corollary.
\begin{cor}
     Let $N$ be a log-concave noise and let $Z$ be a centered Gaussian noise with the same variance. Then
\[
    C_P(Z) \leq C_P(N) \leq C_P(Z)+ \frac12 \log \left( \frac{2\pi}{e} \right).
\]
\end{cor}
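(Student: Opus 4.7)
The plan is to obtain both bounds by combining Theorem \ref{thm:main} with the preceding Proposition. The lower bound $C_P(Z) \leq C_P(N)$ is already part of the Proposition's statement and requires nothing further, so the entire content of the corollary lies in the upper bound, which the Proposition in turn reduces to producing a numerical estimate on the relative-entropy term $D(N) = h(Z) - h(N)$.

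To bound $D(N)$, the key observation is that log-concavity of $N$ (which in particular forces finite variance, so that the Proposition applies in the first place) activates Theorem \ref{thm:main}, giving $h(N) \geq \frac{1}{2}\log \var(N) + 1$. The matched Gaussian $Z$ has the explicit entropy $h(Z) = \frac{1}{2}\log(2\pi e \var(N))$, so subtracting cancels the $\var(N)$ dependence and leaves only the dimension-free constant
\[
    D(N) \leq \frac{1}{2}\log(2\pi e) - 1 = \frac{1}{2}\log(2\pi/e).
\]
Substituting this into the Proposition's upper bound $C_P(N) \leq C_P(Z) + D(N)$ delivers the stated inequality.

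There is no substantive obstacle: the ingredients are the Proposition (already in hand) and Theorem \ref{thm:main}, and log-concavity is precisely the hypothesis that unlocks the sharp entropy bound. The one point worth noting is that the constant $\frac{1}{2}\log(2\pi/e)$ inherits its sharpness directly from Theorem \ref{thm:main}: it is approached by taking $N$ to be a suitably scaled one-sided exponential, matching the equality case of the main theorem, so no improvement of the corollary's constant is possible within this framework without strengthening Theorem \ref{thm:main} itself.
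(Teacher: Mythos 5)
Your argument is correct and is exactly the paper's: apply the Proposition to reduce to bounding $D(N)=h(Z)-h(N)$, then use Theorem \ref{thm:main} together with $h(Z)=\frac12\log(2\pi e\,\var(N))$ to get $D(N)\leq \frac12\log(2\pi/e)$. No differences worth noting.
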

\noindent It other words, using an arbitrary log-concave noise instead of the Gaussian does not increase capacity by more than   $\frac12 \log \left( \frac{2\pi}{e} \right)<0.42$ nats. 

\subsection{Reverse EPI}
Recall that the entropy power of a real random variable $X$ is defined by $\mc{N}(X)=\frac{1}{2\pi e} \exp(2h(X))$. Note that for a Gaussian random variable one has $\mc{N}(Z)=\var(Z)$ and in general $\var(X)=\var(Z)$ implies $\mc{N}(X) \leq \mc{N}(Z)$.
 Theorem \ref{thm:main} can be rewritten in the form $\mc{N}(X) \geq \frac{e}{2\pi} \var(X)$.  The entropy power inequality of Shannon and Stam \cite{Sha48, Sta59} states that for independent random variables $X,Y$ one has 
\[
\mc{N}(X+Y) \geq \mc{N}(X)+\mc{N}(Y).
\]
It is of interest to obtain reverse bounds. Under log-concavity assumption the authors of \cite{marsiglietti2018lower} showed that if $X,Y$ are uncorellated, then $\mc{N}(X+Y) \leq \frac{\pi e}{2} (\mc{N}(X)+\mc{N}(Y))$. Using Theorem \ref{thm:main} we can improve this result.
\begin{cor}
    Let $X,Y$ be log-concave uncorrelated real random variables. Then
    \[
    \mc{N}(X+Y) \leq  \frac{2\pi}{e} \left(\mc{N}(X)+\mc{N}(Y) \right). 
    \]
\end{cor}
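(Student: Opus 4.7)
The plan is to chain together two bounds on entropy power via the variance: an upper bound that holds for any random variable with finite variance (coming from Gaussian maximization of entropy), and a lower bound for log-concave variables that is exactly the content of Theorem \ref{thm:main}.

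First I would record the identity $\mc{N}(Z)=\var(Z)$ for a centered Gaussian $Z$, together with the Gaussian maximum-entropy principle $h(W)\leq h(Z)$ whenever $Z$ is the centered Gaussian with $\var(Z)=\var(W)$. Exponentiating gives the ``easy'' half
\[
\mc{N}(W) \leq \var(W) \quad \text{for any real $W$ with finite variance,}
\]
applied in particular to $W=X+Y$. Second, the ``hard'' half is already supplied: Theorem \ref{thm:main} rewritten in terms of entropy power reads $\mc{N}(X)\geq \frac{e}{2\pi}\var(X)$ for any log-concave $X$, i.e.\ $\var(X)\leq \frac{2\pi}{e}\mc{N}(X)$, and likewise for $Y$.

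Now I combine these. Using the uncorrelatedness of $X$ and $Y$ to write $\var(X+Y)=\var(X)+\var(Y)$, I chain the bounds:
\[
\mc{N}(X+Y) \leq \var(X+Y) = \var(X)+\var(Y) \leq \frac{2\pi}{e}\bigl(\mc{N}(X)+\mc{N}(Y)\bigr),
\]
where the first inequality uses only that $X+Y$ has finite variance, and the last uses the log-concavity of $X$ and $Y$ individually (not of $X+Y$, which need not be log-concave). There is no real obstacle to overcome here; the entire content of the corollary is packaged in Theorem \ref{thm:main}, and the only subtlety worth flagging is that log-concavity is applied to the summands while the Gaussian upper bound is applied to the sum, so we never need $X+Y$ itself to be log-concave.
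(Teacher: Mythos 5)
Your proposal is correct and is essentially identical to the paper's one-line argument: the same chain $\mc{N}(X+Y)\leq\var(X+Y)=\var(X)+\var(Y)\leq\frac{2\pi}{e}(\mc{N}(X)+\mc{N}(Y))$, using the Gaussian maximum-entropy bound on the sum and Theorem \ref{thm:main} on each summand. Nothing to add.
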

\noindent Indeed, one has
\[
    \mc{N}(X+Y) \leq \var(X+Y) = \var(X) + \var(Y) \leq \frac{2\pi}{e} \left( \mc{N}(X)+\mc{N}(Y) \right).
\]
We can also define the R\'enyi entropy power via $\mc{N}_\alpha(X) = \exp(2h_\alpha(X))$, where we removed the normalizing constant $2\pi e$ for simplicity. Similarly as in \cite{bialobrzeski2021r}, Theorem 2 from \cite{LYZ05} together with our Theorem \ref{thm:main} gives the inequality  
\begin{equation} \label{ineq:ent-power}
	C_-(\alpha) \var(X) \leq N_\alpha(X) \leq C_+(\alpha) \var(X), \qquad \alpha>1
\end{equation}
where
\[
	C_-(\alpha) = \alpha^{\frac{2}{\alpha-1}}, \qquad \qquad
 C_+(\alpha) = \frac{3\alpha-1}{\alpha-1} \left( \frac{2\alpha}{3\alpha-1} \right)^{\frac{2}{1-\alpha}}   B\left(\frac12, \frac{\alpha}{\alpha-1}\right)^2 .
\] 
Here $B(x,y)=\frac{\Gamma(x)\Gamma(y)}{\Gamma(x+y)}$ stands for the Beta function. The  right inequality does not need log-concavity. Thus, using the same computation as for the case $\alpha=1$ we get the following corollary.
\begin{cor}
    If $X,Y$ are log-concave uncorrelated real random variables, then for $\alpha>1$ one has
    \[
    \mc{N}_\alpha(X+Y) \leq \frac{C_+(\alpha)}{C_-(\alpha)} \left( \mc{N}_\alpha(X) + \mc{N}_\alpha(Y)  \right).
    \]
\end{cor}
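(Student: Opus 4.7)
The plan is to mimic exactly the three-line derivation used for the $\alpha=1$ corollary, now using the two-sided bound \eqref{ineq:ent-power} in place of the single bound $\mc{N}(X)\geq \frac{e}{2\pi}\var(X)$ that came from Theorem \ref{thm:main}. The key observation is that the upper bound $\mc{N}_\alpha(X) \leq C_+(\alpha)\var(X)$ does not require log-concavity (as the paper notes), while the lower bound $\mc{N}_\alpha(X) \geq C_-(\alpha)\var(X)$ does; fortunately we only need the former for $X+Y$ and the latter separately for $X$ and $Y$, each of which is log-concave by hypothesis.

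Concretely, I would first apply the upper bound in \eqref{ineq:ent-power} to $X+Y$ to obtain
\[
\mc{N}_\alpha(X+Y) \leq C_+(\alpha)\,\var(X+Y).
\]
Since $X$ and $Y$ are uncorrelated, $\var(X+Y)=\var(X)+\var(Y)$, which is where the uncorrelated hypothesis is used (and the only place). Then I would apply the lower bound in \eqref{ineq:ent-power}, valid here because $X$ and $Y$ are log-concave, to each summand:
\[
\var(X) \leq \frac{\mc{N}_\alpha(X)}{C_-(\alpha)}, \qquad \var(Y) \leq \frac{\mc{N}_\alpha(Y)}{C_-(\alpha)}.
\]
Substituting gives
\[
\mc{N}_\alpha(X+Y) \leq C_+(\alpha)\bigl(\var(X)+\var(Y)\bigr) \leq \frac{C_+(\alpha)}{C_-(\alpha)}\bigl(\mc{N}_\alpha(X)+\mc{N}_\alpha(Y)\bigr),
\]
which is the claim.

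There is no real obstacle: the corollary is a direct packaging of \eqref{ineq:ent-power} together with additivity of variance under uncorrelatedness. The substantive content (the lower bound $\mc{N}_\alpha(X)\geq C_-(\alpha)\var(X)$ for log-concave $X$) has already been established by combining Theorem \ref{thm:main} with Theorem 2 of \cite{LYZ05}, exactly as stated in the paragraph preceding the corollary. The only minor point to flag is that $X+Y$ itself need not be log-concave in this argument, but this is irrelevant because the upper half of \eqref{ineq:ent-power} is free of that hypothesis.
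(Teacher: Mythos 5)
Your argument is correct and is exactly the paper's intended proof: apply the log-concavity-free upper bound of \eqref{ineq:ent-power} to $X+Y$, use $\var(X+Y)=\var(X)+\var(Y)$ from uncorrelatedness, and then apply the lower bound (which does use log-concavity) to $X$ and $Y$ separately. Your remark that $X+Y$ need not be log-concave, and that this is harmless, is also the right observation.
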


 \section{Reductions}

In this section we  reduce 
the inequality $h(X) \geq \frac{1}{2} \log \var (X)=1$ to variables with monotone  two-piece affine densities (see below for a precise definition). To achieve this we will make use of two standard ingredients, namely 
rearrangement together with degrees of freedom/localization techniques.

\subsection{Decreasing Rearrangement}

\begin{defn}[Decreasing Rearrangement]
    For a measurable set $A \subseteq \mathbb{R}$ 
let $|A|$ denote its Lebesgue measure and let us define
\[
    A^\downarrow = (0, |A|) \subseteq \mathbb{R}
\]
with the interpretation of $(0,0)$ as the empty set. 

\end{defn}

\begin{defn}
For a measurable function $f: \mathbb{R} \to [0,\infty]$,
 define $f^\downarrow:(0,\infty) \to [0,\infty]$
\begin{align*}
    f^\downarrow(x) = \int_0^\infty \mathbbm{1}_{\{y: f(y) > \lambda \}^\downarrow}(x) d \lambda.
\end{align*}
\end{defn}
We make some elementary observations, starting with the fact that $f^{\downarrow}$ is fully characterized by the equality $\{ f > \lambda \}^{\downarrow} = \{ f^{\downarrow} > \lambda \}$ and in particular, equimeasurable (with respect to the Lebesgue measure) functions possess identical decreasing rearrangements.  The main result of this subsection is Theorem \ref{thm: variance increases}, reduces our problem to decreasing densities.

\begin{prop}
    For $f: \mathbb{R} \to [0,\infty)$, its rearrangement $f^\downarrow$ satisfies, 
    \begin{align} \label{eq: sup rep of decreasing rearrangement}
        f^\downarrow(x) = \sup \{ \lambda : x \in \{ f > \lambda \}^\downarrow \}
    \end{align}
    and
    \begin{align} \label{eq: suplevel set characterization}
        \{ f^\downarrow > \lambda \} = \{ f > \lambda \}^\downarrow .
    \end{align}
\end{prop}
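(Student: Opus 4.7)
The plan is to exploit that the family $\{f>\lambda\}^\downarrow = (0, |\{f>\lambda\}|)$ is \emph{nested} in $\lambda$: since $\lambda \mapsto |\{f>\lambda\}|$ is non-increasing, one has $\{f>\lambda_1\}^\downarrow \supseteq \{f>\lambda_2\}^\downarrow$ whenever $\lambda_1 \le \lambda_2$. Consequently, for fixed $x>0$, the set
\[
    S_x := \{\lambda>0 : x \in \{f>\lambda\}^\downarrow\} = \{\lambda>0 : x < |\{f>\lambda\}|\}
\]
is itself an interval with left endpoint $0$. Writing $s(x) := \sup S_x$, the integral defining $f^\downarrow$ reduces to the Lebesgue measure of this interval, which equals $s(x)$ regardless of whether the right endpoint is attained. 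This is exactly \eqref{eq: sup rep of decreasing rearrangement}.

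For \eqref{eq: suplevel set characterization}, I would translate the condition $f^\downarrow(x) > \lambda$ via the sup representation: it holds iff there exists $\mu > \lambda$ with $x < |\{f>\mu\}|$, i.e.\ iff
\[
    x \;<\; \sup_{\mu > \lambda} |\{f > \mu\}|.
\]
The only analytic input needed is that this supremum equals $|\{f>\lambda\}|$. This follows from monotone continuity of Lebesgue measure: as $\mu \downarrow \lambda$ the sets $\{f > \mu\}$ increase to $\{f > \lambda\}$, so $|\{f>\mu\}| \uparrow |\{f>\lambda\}|$. Combining these, $x \in \{f^\downarrow > \lambda\}$ iff $x \in (0, |\{f>\lambda\}|) = \{f>\lambda\}^\downarrow$.

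There is no serious obstacle: the argument is a direct unraveling of definitions, with the one genuinely measure-theoretic point being the continuity-from-below of $|\{f>\mu\}|$ as $\mu$ decreases to $\lambda$. An alternative is to first check that $f^\downarrow$ is non-increasing and right-continuous (using $S_x = \bigcup_n S_{x_n}$ when $x_n \downarrow x$), after which $\{f^\downarrow > \lambda\}$ is automatically an interval $(0,c)$, and matching lengths via equimeasurability would close the loop; but the direct computation above is shorter and avoids invoking equimeasurability before it is established.
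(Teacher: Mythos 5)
Your proof is correct and follows essentially the same route as the paper's: the sup representation from the nestedness of the intervals $\{f>\lambda\}^\downarrow$, and the level-set identity from continuity from below of Lebesgue measure as $\mu \downarrow \lambda$ (the paper phrases this via the sequence $\lambda' = \lambda + \tfrac{1}{n}$). Your write-up of the first identity is in fact slightly more explicit than the paper's, which simply asserts it follows from the definition.
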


\begin{proof}
    Representation \eqref{eq: sup rep of decreasing rearrangement} follows directly from the 
    definition of $f^\downarrow$.  To prove \eqref{eq: suplevel set characterization} observe that by \eqref{eq: sup rep of decreasing rearrangement} the condition $f^\downarrow(x) > \lambda$ is equivalent to the existence
     of $\lambda' > \lambda$ such that $x \in \{ f > \lambda' \}^\downarrow$. It is therefore enough to prove the following equivalence
     \[
            \exists_{\lambda'>\lambda} \ \ x \in \{ f > \lambda' \}^\downarrow \quad \iff \quad x \in \{ f > \lambda \}^\downarrow. 
     \]  
     By the nestedness the implication $\implies$ is trivial. To show the other direction assume that  $x \in \{ f > \lambda \}^\downarrow$. This is equivalent to $x<|\{f>\lambda\}|$. By the continuity of Lebesgue measure 
     \[
      x < |\{ f > \lambda\}| = \left|  \bigcup_n \left\{ f > \lambda + \frac 1 n \right\} \right| = \lim_n |\{ f > \lambda + 1/n \}| .
     \]
     By taking $\lambda' = \lambda + \frac 1 n$ for large enough $n$ we get $\lambda' > \lambda$ such that $x \in \{ f > \lambda' \}^\downarrow$.
\end{proof}

\begin{prop}\label{prop: rearrangement pushforward same lebesgue measure}
    For $\phi$ measurable and $f$ non-negative,
    \[
        \int \phi(f(x)) dx = \int \phi(f^\downarrow(x))dx.
    \]
\end{prop}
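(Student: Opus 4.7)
The plan is to deduce this equimeasurability statement from the preceding proposition and then extend from super-level-set indicators to arbitrary measurable $\phi$ via the standard measure-theoretic machine.

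First I would record the distributional identity $|\{f^\downarrow > \lambda\}| = |\{f > \lambda\}|$ for every $\lambda \geq 0$, which is immediate from \eqref{eq: suplevel set characterization}: since $\{f^\downarrow > \lambda\}$ equals $\{f > \lambda\}^\downarrow = (0, |\{f > \lambda\}|)$, its Lebesgue measure is $|\{f > \lambda\}|$ (with the convention that $(0, \infty)$ has measure $\infty$). Phrased differently, the pushforward measures $\mu_f := f_{\#} m$ and $\mu_{f^\downarrow} := (f^\downarrow)_{\#} m$ (where $m$ denotes Lebesgue measure on the respective domains) agree on the $\pi$-system $\{(\lambda, \infty] : \lambda \geq 0\}$ of half-infinite intervals in $[0, \infty]$.

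Next I would upgrade this to equality of $\mu_f$ and $\mu_{f^\downarrow}$ as measures on $(0, \infty]$. Both are $\sigma$-finite there (one has $\mu_f((\lambda, \infty]) \leq \|f\|_{L^1}/\lambda$ in the case of interest, namely when $f$ is a probability density), so the Dynkin $\pi$-$\lambda$ theorem, applied on each piece $(\lambda_0, \infty]$ where the measures are finite, forces agreement on the full Borel $\sigma$-algebra of $(0, \infty]$. The abstract change-of-variables formula then gives
\[
\int \phi(f) \, dm \;=\; \int_{(0, \infty]} \phi \, d\mu_f \;=\; \int_{(0, \infty]} \phi \, d\mu_{f^\downarrow} \;=\; \int \phi(f^\downarrow) \, dm
\]
for non-negative measurable $\phi$ vanishing at $0$, and then for general signed measurable $\phi$ via the decomposition $\phi = \phi^+ - \phi^-$.

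The main obstacle---and the only subtle point---is the potential discrepancy at the atom $\{0\}$: in general $|\{f = 0\}|$ and $|\{f^\downarrow = 0\}|$ need not coincide (consider $f = \mathbbm{1}_{[0,\infty)}$, for which the former is infinite and the latter is zero), so the stated identity must be understood under the implicit convention $\phi(0) = 0$. This is harmless in every application in the paper, where $\phi(t) \in \{t, \, t\log t, \, -t\log t\}$ and the value at zero is zero by definition.
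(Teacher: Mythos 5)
Your proof is correct and follows essentially the same route as the paper's: both identify the pushforward measures $f_{\#}m$ and $(f^{\downarrow})_{\#}m$ by checking agreement on the half-lines $(\lambda,\infty)$ via the characterization $\{f^{\downarrow}>\lambda\}=\{f>\lambda\}^{\downarrow}$, and then integrate $\phi$ against the common pushforward. Your additional care---invoking Dynkin's $\pi$-$\lambda$ theorem to pass from half-lines to all Borel sets, and noting that the identity requires the convention $\phi(0)=0$ because the zero-sets of $f$ and $f^{\downarrow}$ need not have equal measure---only makes explicit what the paper leaves implicit, and is harmless for the choices of $\phi$ actually used.
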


\begin{proof}
    This is equivalent to the statement that $f$ pushes the Lebesgue measure $dm$ forward to the same measure $f^{\downarrow}$ pushes the Lebesgue measure to.  Thus it suffices to check $$f_{\#} m (\lambda, \infty) =  f_{\#}^\downarrow m (\lambda, \infty).$$  This is just $|\{ f > \lambda \}| = | \{ f^\downarrow > \lambda \}|$, which follows from the characterization $\{ f^\downarrow > \lambda \} = \{ f > \lambda \}^\downarrow$.
\end{proof}
Note that taking $\phi(x) = - x \log x$ shows that the decreasing rearrangement $f^\downarrow$ preserves the entropy of a density function $f$.

\begin{defn}
    For a random variable $X$ with density function $f$, define $X^{\downarrow}$ to be a random variable drawn from the density function $f^\downarrow$.
\end{defn}


When $X$ has density $f$ we will write the variance as $\Var(X)$ and $\Var(f)$ interchangeably.

\begin{lem}\label{lem: decreasing rearrangement decreases moments}
    For non-negative $X$, and increasing, non-negative $\phi$,
    \[
        \mathbb{E}\phi(X^{\downarrow}) \leq \mathbb{E}\phi(X).
    \]
\end{lem}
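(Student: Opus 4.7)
The plan is to show that $X$ stochastically dominates $X^{\downarrow}$, i.e.\ $\mathbb{P}(X^{\downarrow} > t) \leq \mathbb{P}(X > t)$ for every $t \in \mathbb{R}$, and then deduce the expectation inequality by a layer-cake argument. The core intermediate step is the comparison of cumulative integrals: for every $t \geq 0$,
\[
\int_0^t f^{\downarrow}(x)\, dx \;\geq\; \int_0^t f(x)\, dx.
\]

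To prove this comparison I would apply the layer-cake formula on both sides. Using $\{f^{\downarrow} > \lambda\} = (0, |\{f > \lambda\}|)$, which is \eqref{eq: suplevel set characterization} combined with the definition of $A^{\downarrow}$, the left-hand side becomes $\int_0^{\infty} \min(t, |\{f > \lambda\}|)\, d\lambda$, while the right-hand side is $\int_0^{\infty} |\{f > \lambda\} \cap [0,t]|\, d\lambda$; the pointwise bound $|\{f > \lambda\} \cap [0,t]| \leq \min(t, |\{f > \lambda\}|)$ is immediate since $\{f > \lambda\} \cap [0,t]$ is contained in both $\{f > \lambda\}$ and $[0,t]$. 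Combining this with $\int_{\mathbb{R}} f = \int_0^{\infty} f^{\downarrow} = 1$ (Proposition \ref{prop: rearrangement pushforward same lebesgue measure} applied to $\phi = \mathrm{id}$) and the fact that $f$ is supported in $[0,\infty)$ because $X \geq 0$, one rearranges the above to $\mathbb{P}(X^{\downarrow} > t) \leq \mathbb{P}(X > t)$ for $t \geq 0$; for $t < 0$ both probabilities equal $1$ since $X, X^{\downarrow}$ are non-negative.

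With stochastic domination in hand, the conclusion follows from the Tonelli identity $\mathbb{E}\phi(Y) = \int_0^{\infty} \mathbb{P}(\phi(Y) > s)\, ds$, valid for any non-negative random variable $Y$ and non-negative $\phi$. Since $\phi$ is non-decreasing, each super-level set $\{\phi > s\}$ is an upward half-line, so stochastic domination yields $\mathbb{P}(\phi(X^{\downarrow}) > s) \leq \mathbb{P}(\phi(X) > s)$ for every $s \geq 0$, and integrating in $s$ preserves the inequality. There is no genuine obstacle here; the one point requiring attention is that the non-negativity hypothesis on $X$ (hence support of $f$ in $[0,\infty)$) is essential, as without it the simple truncation $[0,t]$ in the layer-cake computation would miss part of the mass of $f$ and the comparison of cumulatives would fail.
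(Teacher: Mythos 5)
Your argument is correct and is essentially the paper's: the paper's key observation $\left| (a,\infty) \cap A \right| \geq \left| (a,\infty) \cap A^{\downarrow} \right|$ for $A \subseteq [0,\infty)$, applied to $A = \{f > \sigma\}$ and integrated in $\sigma$, is exactly the tail comparison $\mathbb{P}(X > t) \geq \mathbb{P}(X^{\downarrow} > t)$ that you establish, and both proofs then conclude by the same layer-cake integration over the super-level sets of $\phi$, which are upward half-lines since $\phi$ is increasing. The only cosmetic difference is that you name the intermediate step as first-order stochastic dominance rather than folding everything into a single two-fold Tonelli computation.
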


\begin{proof}
    The proof follows from the elementary observation that for $a \geq 0$, 
    \begin{align} \label{eq: decreasing rearrangement has smaller tails}
        \left| (a, \infty) \cap A \right| \geq \left| (a,\infty) \cap A^{\downarrow} \right|.
    \end{align}
    Using the layer-cake decomposition,
    \begin{align*}
        \mathbb{E}\phi(X) 
            &=
                \int_0^\infty  \int_0^\infty \left( \int_0^\infty \mathbbm{1}_{\{ \phi > \lambda \}}(x) \mathbbm{1}_{\{ f > \sigma \}}(x) dx \right) d\lambda d\sigma
                    \\
            &=
                \int_0^\infty  \int_0^\infty  \left| \{ \phi > \lambda \}  \cap \{ f > \sigma \} \right|d\lambda d\sigma
                    \\
            &\geq
                \int_0^\infty  \int_0^\infty  \left| \{ \phi > \lambda \}  \cap \{ f^\downarrow > \sigma \} \right|d\lambda d\sigma
                    =
                \mathbb{E}\phi(X^{\downarrow}),
    \end{align*}
    where the inequality follows from \eqref{eq: decreasing rearrangement has smaller tails}.
\end{proof}

\begin{lem} \label{lem: variance decomposition}
    For probability densities $f_0, f_1$ and $\lambda \in [0,1]$ the density $f = \lambda f_0 + (1-\lambda) f_1$ has variance,
    \[
        \Var(f) =  \lambda \Var(f_0) + (1-\lambda) \Var(f_1) + \lambda (1-\lambda) (\mu_1 - \mu_0)^2
    \]
    where $\mu_i$ denotes the barycenter of $f_i$, that is $\mu_i = \int x f_i(x) dx$.
\end{lem}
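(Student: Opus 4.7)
The plan is to proceed by direct calculation, expanding the variance of the mixture density in terms of its first and second moments and then algebraically rearranging. This is simply the law of total variance written out in the language of densities, so no deep ideas are needed — the main (and only) thing to verify is the algebra.

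First I would introduce the shorthand $\mu_i = \int x f_i(x)\,dx$ and $s_i = \int x^2 f_i(x)\,dx$, so that $\Var(f_i) = s_i - \mu_i^2$. Since the mixture density $f = \lambda f_0 + (1-\lambda) f_1$ integrates against monomials linearly, the mean of $f$ is $\mu = \lambda \mu_0 + (1-\lambda)\mu_1$ and the second moment of $f$ is $\lambda s_0 + (1-\lambda) s_1$. Therefore
\[
    \Var(f) = \lambda s_0 + (1-\lambda) s_1 - \bigl(\lambda \mu_0 + (1-\lambda)\mu_1\bigr)^2.
\]
Substituting $s_i = \Var(f_i) + \mu_i^2$ gives
\[
    \Var(f) = \lambda \Var(f_0) + (1-\lambda)\Var(f_1) + \Bigl[ \lambda \mu_0^2 + (1-\lambda)\mu_1^2 - \bigl(\lambda \mu_0 + (1-\lambda) \mu_1\bigr)^2 \Bigr].
\]

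The only thing left is to check that the bracketed expression equals $\lambda(1-\lambda)(\mu_1 - \mu_0)^2$, which is a one-line expansion: the cross term $-2\lambda(1-\lambda)\mu_0 \mu_1$ combines with $\lambda \mu_0^2 + (1-\lambda)\mu_1^2 - \lambda^2 \mu_0^2 - (1-\lambda)^2 \mu_1^2 = \lambda(1-\lambda)(\mu_0^2 + \mu_1^2)$ to produce exactly $\lambda(1-\lambda)(\mu_0 - \mu_1)^2$.

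There is no real obstacle here; the identity is the law of total variance applied to the two-stage experiment in which one first chooses the index $i \in \{0,1\}$ with probabilities $(\lambda, 1-\lambda)$ and then samples from $f_i$. If one prefers a conceptual presentation over the direct computation, I would instead invoke $\Var(X) = \mathbb{E}[\Var(X \mid I)] + \Var(\mathbb{E}[X \mid I])$ for the mixing variable $I$, noting that the first term is $\lambda \Var(f_0) + (1-\lambda)\Var(f_1)$ and the second is the variance of the two-point distribution $(\mu_0, \mu_1)$ with weights $(\lambda, 1-\lambda)$, which is $\lambda(1-\lambda)(\mu_1 - \mu_0)^2$.
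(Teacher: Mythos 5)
Your proof is correct and follows essentially the same route as the paper: expand $\Var(f)$ via the first and second moments of the mixture and identify the leftover quadratic in $\mu_0,\mu_1$ as $\lambda(1-\lambda)(\mu_1-\mu_0)^2$. The law-of-total-variance remark is a nice conceptual aside but adds nothing beyond the computation already given.
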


\begin{proof}
It holds
    \begin{align*}
       & \Var(f) = \int x^2 ( \lambda  f_0(x) + (1-\lambda) f_1(x) ) dx - \left( \int x (\lambda  f_0(x) + (1-\lambda) f_1(x) ) dx \right)^2
        \\
            =&
                \lambda \left(  \int x^2 f_0(x) dx -   \left( \int x f_0(x) dx \right)^2 \right) + (1- \lambda)  \left(  \int x^2 f_1(x) dx -   \left( \int x f_1(x) dx \right)^2 \right)
                    \\
            &+
                \lambda \left( \int x f_0(x) dx \right)^2 + (1-\lambda) \left( \int x f_1(x) dx \right)^2 - \left( \int x (\lambda f_0(x) + (1-\lambda) f_1(x) ) dx \right)^2
                    \\
            =&
                \lambda \Var(f_0) + (1-\lambda) \Var(f_1) +  \lambda (1-\lambda) (\mu_1 - \mu_0)^2.
    \end{align*}
\end{proof}

\begin{thm} \label{thm: variance increases}
    For $X$ log-concave,
    \[
        \Var(X) \leq \Var(X^{\downarrow}).
    \]
\end{thm}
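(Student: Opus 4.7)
My plan is to reduce the inequality $\Var(f) \leq \Var(f^\downarrow)$ to a correlation inequality between the endpoints of the superlevel sets of $f$, exploiting log-concavity in a very direct way. Since $f = e^{-V}$ with $V$ convex, each superlevel set $\{f > \lambda\}$ is an interval $[a(\lambda), b(\lambda)]$ of length $\ell(\lambda) := b(\lambda) - a(\lambda)$, and by \eqref{eq: suplevel set characterization} the corresponding superlevel set of $f^\downarrow$ is simply $(0, \ell(\lambda))$. Writing each density via layer-cake, $f(x) = \int_0^\infty \mathbbm{1}_{[a(\lambda),b(\lambda)]}(x)\, d\lambda$ and analogously for $f^\downarrow$, I would apply Fubini to express the first and second moments of both $f$ and $f^\downarrow$ as integrals in $\lambda$ involving $a$, $b$, and $\ell$. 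The normalization $1 = \int f\, dx = \int_0^\infty \ell(\lambda)\, d\lambda$ makes $\mu(d\lambda) := \ell(\lambda)\, d\lambda$ a probability measure on the range of values of $f$.

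The core of the argument is a short algebraic collapse. The elementary identity $b^3 - a^3 - (b-a)^3 = 3 a b (b-a)$ gives
\[
    \int x^2 f\, dx - \int x^2 f^\downarrow\, dx = \int_0^\infty a(\lambda) b(\lambda) \ell(\lambda)\, d\lambda,
\]
while a parallel difference-of-squares manipulation, using the observation $(a+b) \pm \ell \in \{2a,\, 2b\}$, yields
\[
    \left(\int x f\, dx\right)^{\!2} - \left(\int x f^\downarrow\, dx\right)^{\!2} = \left(\int_0^\infty a \ell\, d\lambda\right)\!\left(\int_0^\infty b \ell\, d\lambda\right).
\]
Subtracting then collapses the variance difference into a single covariance:
\[
    \Var(f) - \Var(f^\downarrow) = \int a b \ell\, d\lambda - \left(\int a \ell\, d\lambda\right)\!\left(\int b \ell\, d\lambda\right) = \mathrm{Cov}_\mu(a, b).
\]

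To finish, nestedness of the superlevel sets forces $\lambda \mapsto a(\lambda)$ to be non-decreasing and $\lambda \mapsto b(\lambda)$ to be non-increasing, so by the standard Chebyshev identity
\[
    2\,\mathrm{Cov}_\mu(a,b) = \iint \bigl(a(\lambda) - a(\lambda')\bigr)\bigl(b(\lambda) - b(\lambda')\bigr)\, d\mu(\lambda)\, d\mu(\lambda'),
\]
whose integrand is pointwise non-positive, one concludes $\mathrm{Cov}_\mu(a,b) \leq 0$. The only real obstacle I anticipate is bookkeeping: justifying Fubini and treating the case of unbounded support (e.g.\ $b(\lambda) \to +\infty$ as $\lambda \to 0^+$), which is routine since log-concave densities have exponentially decaying tails and therefore finite moments of all orders. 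As a sanity check, the formula $\Var(f^\downarrow) - \Var(f) = -\mathrm{Cov}_\mu(a,b)$ vanishes precisely when $a$ or $b$ is $\mu$-a.e.\ constant, i.e.\ when $f$ is supported on a half-line, which is consistent with the one-sided exponential being extremal in Theorem \ref{thm:main}.
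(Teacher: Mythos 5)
Your proof is correct, and it takes a genuinely different route from the paper's. The paper argues by induction on the number of layers of a unimodal step density $f=\sum_k \lambda_k\mathbbm{1}_{I_k}/|I_k|$ with nested intervals: it peels off the widest layer, applies the variance decomposition of Lemma \ref{lem: variance decomposition} to $f=\lambda f_0+(1-\lambda)f_1$ and to $f^\downarrow$, compares the three resulting terms using the induction hypothesis together with Lemma \ref{lem: decreasing rearrangement decreases moments} (the rearrangement decreases the first moment of a non-negative variable), and then invokes a limiting argument for general log-concave $X$. You instead work directly with the continuous layer-cake representation: writing $\{f>\lambda\}=(a(\lambda),b(\lambda))$, $\ell=b-a$, and $\mu(d\lambda)=\ell(\lambda)\,d\lambda$, your two algebraic collapses (both of which check out: $b^3-a^3-(b-a)^3=3ab(b-a)$, and the difference-of-squares step since $\tfrac{(a+b)\pm(b-a)}{2}\in\{a,b\}$) give the exact identity
\[
\Var(f^{\downarrow})-\Var(f)=-\mathrm{Cov}_{\mu}(a,b),
\]
and nestedness of the level sets makes $a$ non-decreasing and $b$ non-increasing, so the covariance is non-positive by the Chebyshev identity you quote. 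Your argument buys several things the paper's does not: it is induction- and approximation-free, it uses only that the superlevel sets are nested intervals (quasi-concavity) rather than log-concavity, it quantifies the variance deficit exactly, and it identifies the equality case ($a$ or $b$ $\mu$-a.e.\ constant, i.e.\ $f$ monotone on a half-line). What the paper's route buys is mainly economy of means, since it reuses Lemmas \ref{lem: variance decomposition} and \ref{lem: decreasing rearrangement decreases moments}, which are stated anyway. The only point you should write out rather than wave at is the absolute convergence needed to split $\int\bigl(b^3-a^3-(b-a)^3\bigr)\,d\lambda$ into separate finite integrals; this follows from, e.g., $|ab|\,\ell\le\tfrac12(a^2+b^2)\,\ell\le 3\int_a^b x^2\,dx$, whence $\int|ab|\,d\mu\le 3\,\mathbb{E}X^2<\infty$, and similarly for the first moments.
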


\begin{proof}
    We prove the result when $X$ has a density $f$ given by a unimodal step function by induction, that is $f = \sum_{k=0}^n \lambda_k \mathbbm{1}_{I_k}/|I_k|$ with $I_k$ intervals satisfying $I_{k+1} \subsetneq I_k$  and $\lambda_k > 0$.  An easy limiting argument gives the result for log-concave $X$.  When $n = 0$, $X$ is uniform and the result is immediate.  Assuming the result for $n' < n$, we proceed. 
 The density of $f$ can be written as $\lambda f_0 + (1-\lambda) f_1$, with $\lambda = \lambda_0$, $f_0 = \frac{\mathbbm{1}_{I_0}}{|I_0|}$, and $f_1 = \sum_{k=1}^n \frac{\lambda_k}{1-\lambda_0} \frac{\mathbbm{1}_{I_k}}{|I_k|}$.  Observing that $f_1$ takes strictly less  values than $f$ and that by affine invariance of the inequality we may assume that $I_0 = (0,1)$, and by considering $\tilde{X} = 1 - X$, we may assume without loss of generality, that $\int x f_1(x)dx \leq \frac 1 2$.  \\

 By Lemma \ref{lem: variance decomposition},
 \[
    \Var(f) = \lambda \Var(f_0) + (1-\lambda) \Var(f_1) + \lambda (1-\lambda) \left(\frac 1 2 - \int x f_1(x) dx \right)^2.
 \]
 Observe that $f^{\downarrow} = \lambda f_0^\downarrow + (1-\lambda) f_1^{\downarrow} $. Applying Lemma \ref{lem: variance decomposition} to $f^\downarrow$,
 \[
 \Var(f^\downarrow) = \lambda \Var(f_0^\downarrow) + (1-\lambda) \Var(f_1^\downarrow) + \lambda (1-\lambda) \left(\frac 1 2 - \int x f_1^\downarrow(x) dx \right)^2.
 \]
Clearly $f_0^\downarrow = f_0$. By induction $\Var(f_1) \leq \Var(f_1^{\downarrow})$ and by Lemma \ref{lem: decreasing rearrangement decreases moments}, applied to $\phi(x) = x$, we have $\frac 1 2 \geq \int x f_1(x) dx \geq \int x f_1^\downarrow(x) dx$. The result follows.
\end{proof}

\begin{lem} \label{lem: decreasing rearrangement decreases ratio}
    For $X$ log-concave,
    \[
        \frac{e^{2 h(X)}}{\Var(X)} \geq \frac{e^{2h(X^{\downarrow})}}{\Var(X^\downarrow)}
    \]
\end{lem}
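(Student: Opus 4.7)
The plan is to observe that the lemma is essentially immediate from two facts already established: the decreasing rearrangement preserves entropy, and (by the previous theorem) it can only increase the variance of a log-concave density.

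First, I would apply Proposition \ref{prop: rearrangement pushforward same lebesgue measure} with the choice $\phi(x) = -x\log x$. Since $h(X) = \int \phi(f(x))\,dx$ and $h(X^\downarrow) = \int \phi(f^\downarrow(x))\,dx$, the proposition gives $h(X) = h(X^\downarrow)$; this just reflects the fact that $f$ and $f^\downarrow$ are equimeasurable, so every functional of the form $\int \phi \circ f$ is preserved. Hence $e^{2h(X)} = e^{2h(X^\downarrow)}$.

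Next, since $X$ is log-concave by hypothesis, Theorem \ref{thm: variance increases} gives $\Var(X) \leq \Var(X^\downarrow)$. Dividing the (equal) numerators by the respective (ordered) denominators yields
\[
\frac{e^{2h(X)}}{\Var(X)} \;\geq\; \frac{e^{2h(X^\downarrow)}}{\Var(X^\downarrow)},
\]
as required.

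There is really no obstacle here: the lemma is a direct combination of the entropy-preservation from Proposition \ref{prop: rearrangement pushforward same lebesgue measure} and the variance-monotonicity from Theorem \ref{thm: variance increases}. The only mild subtlety is that $\phi(x) = -x\log x$ is not monotone or non-negative, so one should note that Proposition \ref{prop: rearrangement pushforward same lebesgue measure} requires only measurability of $\phi$ (together with integrability, which follows from $X$ being log-concave with finite variance and hence finite entropy). Once that is flagged, the proof reduces to a single line.
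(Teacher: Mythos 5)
Your proposal is correct and matches the paper's argument exactly: the lemma is stated there as an immediate consequence of Theorem \ref{thm: variance increases} and Proposition \ref{prop: rearrangement pushforward same lebesgue measure} applied with $\phi(x) = -x\log x$. Your added remark about the integrability needed for the entropy identity is a reasonable, if minor, point of care.
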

\noindent This follows immediately from Theorem \ref{thm: variance increases} and Proposition \ref{prop: rearrangement pushforward same lebesgue measure} (with $\phi(x) = - x \log x$) since decreasing rearrangement will preserve entropy and increase variance. It allows us to reduce our problem to $X$ log-concave with monotone decreasing density, in light of the following.

\begin{prop}
    For $X$ log-concave, $X^{\downarrow}$ is log-concave as well.
\end{prop}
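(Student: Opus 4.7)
The plan is to write $f = e^{-V}$ with $V : \mathbb{R} \to (-\infty, \infty]$ convex, and to describe $f^{\downarrow}$ through the sublevel-volume function $G(s) := |\{V < s\}|$. Because $V$ is convex, each sublevel set $\{V < s\}$ is an interval and hence already equal to its own downward rearrangement. Combined with $\{f^{\downarrow} > \lambda\} = \{f > \lambda\}^{\downarrow}$, this gives
\[
\{f^{\downarrow} > \lambda\} = (0, G(-\log \lambda)),
\]
and the supremum representation \eqref{eq: sup rep of decreasing rearrangement} then yields $-\log f^{\downarrow}(x) = G^{-1}(x)$ on the interior of $\supp f^{\downarrow}$. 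So the proof reduces to showing that $G^{-1}$ is convex, which will follow from $G$ being concave and nondecreasing.

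\medskip

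The core step is concavity of $G$. Monotonicity is immediate from nestedness of the sublevel sets. For concavity, fix $s_1, s_2$ with $G(s_i) > 0$ and $t \in [0,1]$. Convexity of $V$ gives the inclusion
\[
t\{V < s_1\} + (1-t)\{V < s_2\} \subseteq \{V < ts_1 + (1-t)s_2\},
\]
since $V(tx_1 + (1-t)x_2) \leq tV(x_1) + (1-t)V(x_2) < ts_1 + (1-t)s_2$ whenever $V(x_i) < s_i$. Because Minkowski combinations of intervals in $\mathbb{R}$ satisfy the identity $|tA + (1-t)B| = t|A| + (1-t)|B|$, taking Lebesgue measure gives
\[
tG(s_1) + (1-t)G(s_2) \leq G(ts_1 + (1-t)s_2),
\]
which is concavity of $G$.

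\medskip

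Concavity and monotonicity of $G$ imply that its (generalized) inverse $G^{-1}$ is convex and nondecreasing on $(0, |\supp f|)$, so $-\log f^{\downarrow} = G^{-1}$ is convex there, i.e.\ $f^{\downarrow}$ is log-concave. The one piece of bookkeeping is the endpoint case when $V$ attains its minimum on a nondegenerate interval: then $G$ has a jump at $\min V$ and $f^{\downarrow}$ starts with a constant plateau at value $\|f\|_\infty$. A constant is trivially log-concave and glues to the convex branch $G^{-1}$ without destroying convexity of $-\log f^{\downarrow}$ on the whole support. I expect this endpoint/plateau gluing to be the most delicate part of the write-up, but the substantive content is the Brunn–Minkowski-style concavity of $G$ established above.
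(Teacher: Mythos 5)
Your argument is correct and rests on exactly the same mechanism as the paper's proof: the Minkowski inclusion of level sets coming from convexity of $V$, combined with the fact that in dimension one the Minkowski combination of nonempty intervals has additive length (the equality case of Brunn--Minkowski). The only difference is packaging --- the paper verifies the level-set characterization of log-concavity directly for $f^{\downarrow}$, where the inclusion of the intervals $(0,\ell)$ anchored at the origin reduces to a comparison of lengths, which sidesteps the generalized-inverse and plateau bookkeeping that your formulation via $G^{-1}$ requires.
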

This is an immediate corollary of a more general result, Proposition 7.5.8 in \cite{melbourne2019rearrangement}.  We include the proof here for the convenience of the reader. 
\begin{proof}
    Note that $X$ has a log-concave density $f$ if and only if 
    \begin{align} \label{eq: set theoretic inclusion}
          (1-t) \{ f > \lambda_1 \} + t \{ f > \lambda_2 \} \subseteq
          \{ f > \lambda_1^{1-t} \lambda_2^t \}.
    \end{align}
    Thus, to show that $X^\downarrow$ is log-concave it suffices to prove
    \[
         (1-t) \{ f > \lambda_1 \}^\downarrow + t \{ f > \lambda_2 \}^\downarrow \subseteq 
        \{ f > \lambda_1^{1-t} \lambda_2^t \}^\downarrow
    \]
    But since both sets are open intervals it suffices to prove that the right hand side has bigger volume, which follows Brunn-Minkowski inequality and the set theoretic inclusion in \eqref{eq: set theoretic inclusion},
    \begin{align*}
         | (1-t) \{ f > \lambda_1 \}^\downarrow + t \{ f > \lambda_2 \}^\downarrow | 
            & \leq 
                | (1-t) \{ f > \lambda_1 \} + t \{ f > \lambda_2 \}| 
                    \\
            &\leq |\{ f > \lambda_1^{1-t} \lambda_2^t \}| 
                    \\
                &= 
                |\{ f > \lambda_1^{1-t} \lambda_2^t \}^\downarrow|.
    \end{align*}
\end{proof}

\subsection{Degrees of freedom} Our goal is to show that in order to prove Theorem \ref{thm:main} it suffices to consider functions of the form $e^{-V}$ where $V$ is a two-piece affine on a finite interval. This is a standard argument appearing e.g. in \cite{MNT21}, so we only sketch it. 

\emph{Step 1.} Let $\sigma^2 = \var(X)$. By the previous subsection, in order to prove the inequality $h(f) \geq \log(e \sigma)$ it suffices to consider non-increasing densities on $[0,\infty)$. In fact by an approximation argument we can assume that the support of $f$ is a finite interval. Indeed if we define $f_n = e^{-V}\mathbbm{1}_{(0,n)}/c_n$, where $c_n = \int_0^n e^{-V}$ is the normalizing constant, then clearly $c_n \to 1$ as $n \to \infty$ and by the Lebesgue dominated convergence theorem $\var(f_n)\to \var(f)$. We also have
\[
    h(f_n) = \frac{1}{c_n} \int_0^n V e^{-V} + \frac{\log c_n}{c_n} \int_0^n  e^{-V} \to h(f)
\]
by the Lebesgue dominated convergence theorem, as $|V|e^{-V} =|V|e^{-V/2} e^{-V/2} \leq e^{-V/2}$ is integrable.  

\emph{Step 2.} We can therefore fix the interval $[0,L]$ on which the function $f$ is defined. Let $A=A_{L,\sigma}$ denote the set of log-concave non-increasing densities supported in $[0,L]$ and having variance $\sigma^2$. We have $f(0)^2 \var(X) \leq f(0)^2 \mE X^2 \leq 2$, see \cite{Ball88}. To see this we follow the argument from \cite{MNT21}. By scaling we can assume that $f(0)=1$. If $g(x)=e^{-x}\mathbbm{1}_{[0,\infty)}(x)$ then by log-concavity of $f$ the function $f-g$ changes sign in exactly one point $a>0$ and thus
\[
    \mE X^2-2 = \int_0^\infty x^2 (f(x)-g(x)) \dd x = \int_0^\infty (x^2-a^2) (f(x)-g(x)) \dd x  \leq 0
\]
since the integrant is non-positive.
This shows that $f$ is bounded by $2 \sigma^{-2}$ and in particular $h(f) =-\int f\log f \geq -\log f(0)$ is bounded from below. This shows that the quantity $M=\inf \{h(f): f \in A_{L,\sigma}\}$ is finite. Let $f_n$ be such that $h(f_n) \to M$. By a straightforward adaptation of Lemma 12 from \cite{MNT21} we get that $(f_n)$ has a convergent subsequence $f_{n_k} \to f_0 \in A$ and by the Lebesgue dominated convergence theorem $h(f_0)=M$. This shows that the infimum of $h(f)$ is attained on $A$.    

\emph{Step 3.} We now apply the theory of degrees of freedom due to Fradelizi and Gu\'edon from \cite{fradelizi2006generalized}. We say that $f \in A$ has $d$ degrees of freedom if there exist $\e>0$ and linearly independent functions $f_1,\ldots, f_d$ such that $f_\delta := f+\sum_{i=1}^d \delta_i f_i$ is log-concave and non-incresing for all $|\delta_i| \leq \e$. Suppose $f$ has at least $4$ degrees of freedom. Then 
\[
\int f_\delta(x) \dd x= \int f(x) \dd x, \qquad \int x f_\delta(x) \dd x = \int x f(x) \dd x, \qquad \int x^2 f_\delta(x) \dd x = \int x^2 f(x) \dd x
\]
is a system of $3$ linear equations in variables $\delta_1, \ldots, \delta_4$ and thus has nontrivial linear subspace of solutions. Thus there is $\delta$ such that $f_\delta, f_{-\delta} \in A$ and 
\[
    h(f) = h( \frac12 f_\delta + \frac12 f_{-\delta} ) >  \frac12 h(f_\delta) + \frac12 h(f_{-\delta})
\]
since entropy is a strictly convex functional. Thus $f$ is not the extremizer of $h(f)$. This shows that extremizers have to have at most $3$ degrees of freedom and  Step IV of the proof of Theorem 1 from \cite{MNT21} shows that such functions must be piecewise log-affine with at most two pieces. 

\subsection{Localization}

Alternatively, we can proceed with extreme point analysis following Fradelizi and Guedon \cite{fradelizi2006generalized}.
For a fixed compact interval $K \subseteq \mathbb{R}$,  and upper semi-continuous functions $g_1, g_2$ define $\mathcal{P}_g$ to be the space of log-concave probability measures $\mu$ supported in $K$ such that 
\[
    \int g_i d\mu \geq 0.
\]
We will use the following special case of Fradelizi and Guedon.
\begin{thm}[\cite{fradelizi2006generalized}{ Theorem 1}] \label{thm: fradelizi-guedon}
    Let $\nu$ be an extreme point of the convex hull of $\mathcal{P}_g$, then $\nu$ is a point mass, or $\nu$ has density $e^{-V}$ with respect to the Lebesgue measure where on the support of $\nu$,  $V = \max \{ \phi_1, \phi_2 \}$ for $\phi_i$ affine.
\end{thm}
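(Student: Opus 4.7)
The plan is an extreme-point perturbation analysis in the spirit of Choquet-Bauer theory, adapted to the non-convex class of log-concave measures as developed by Fradelizi and Guédon.

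First, I would verify that any extreme point $\nu$ of $\mathrm{conv}(\mathcal{P}_g)$ lies in $\mathcal{P}_g$ itself: if $\nu \in \mathrm{conv}(\mathcal{P}_g) \setminus \mathcal{P}_g$ then $\nu$ admits by definition a nontrivial representation as a convex combination of distinct elements of $\mathcal{P}_g$. Hence $\nu$ is log-concave and supported in $K$. If $\nu$ has an atom, log-concavity forces $\nu$ to be a single Dirac mass, yielding the first alternative. Otherwise $\nu$ has density $e^{-V}$ on an interval $I \subseteq K$ with $V$ convex, and the goal is to show $V|_I = \max\{\phi_1,\phi_2\}$ for affine $\phi_1, \phi_2$.

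The key step is a contradiction argument: suppose $V|_I$ is not a maximum of two affine functions. Then $V$ has more than two affine pieces in a suitable sense — for example, a piecewise-affine approximation of $V$ requires at least three pieces, equivalently the second-derivative measure $V''$ on $I$ has support meeting three disjoint open intervals. I would then construct two distinct log-concave probability measures $\nu_+, \nu_- \in \mathcal{P}_g$ with $\nu = \frac{1}{2}(\nu_+ + \nu_-)$, contradicting extremality. The construction proceeds by parametrizing a finite-dimensional family of pairs of convex functions $(V_+, V_-)$ on $I$ with $e^{-V} = \frac{1}{2}(e^{-V_+} + e^{-V_-})$ pointwise; the extra affine pieces of $V$ furnish a parameter space of sufficient dimension to absorb the at most three linear constraints — the normalization $\int e^{-V_\pm}\,dx = 1$ and the active moment equalities $\int g_i e^{-V_\pm}\,dx = 0$ at $\nu$, of which there are at most two — and leave a nontrivial kernel. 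Inactive moment constraints persist under small perturbations by continuity, so they contribute no further conditions.

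The main obstacle is the explicit construction and parametrization of log-concavity-preserving decompositions $e^{-V} = \frac{1}{2}(e^{-V_+} + e^{-V_-})$, together with a clean dimension count. Subtleties include approximating possibly non-smooth $V$ by piecewise-affine convex functions, matching the supports of $e^{-V_\pm}$ with that of $e^{-V}$, and verifying convexity of each $V_\pm$ after perturbation via a direct check on slopes and breakpoints. Perturbations of the endpoints of $I$, when $I$ is interior to $K$, only add further degrees of freedom and reinforce the conclusion; the core linear-algebraic output is that a convex function with at least three affine pieces supports more perturbation directions than the moment constraints can exhaust.
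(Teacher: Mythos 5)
First, a point of reference: the paper does not prove this statement at all --- it is imported verbatim as Theorem 1 of \cite{fradelizi2006generalized}, so the only thing to compare your proposal against is the Fradelizi--Gu\'edon localization argument itself. Your overall strategy (extreme points of the convex hull lie in $\mathcal{P}_g$; atoms force a Dirac mass by log-concavity; otherwise perturb $\nu=e^{-V}\,dx$ into $\tfrac12(\nu_++\nu_-)$ and count degrees of freedom against the normalization plus the at most two active moment constraints) is indeed the right one and matches the spirit of their proof. The handling of inactive constraints by continuity and the observation that active inequalities become equalities for both $\nu_\pm$ are also correct.

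The genuine gap is that the central step --- the construction of the perturbation family and the identification of its dimension --- is asserted rather than carried out, and the structural condition you propose to negate is wrong. The function $V$ equals $\max\{\phi_1,\phi_2\}$ on the support precisely when the measure $V''$ restricted to the interior of the support is concentrated at (at most) one point; your criterion ``$V''$ has support meeting three disjoint open intervals'' is off by one (three affine pieces means $V''$ has exactly two atoms) and does not cover non-piecewise-affine $V$, while ``approximate $V$ by a piecewise-affine function'' is not a valid reduction since it changes the measure and the constraint values. The clean way to execute your plan is multiplicative: set $\nu_\pm=(1\pm h)\nu$, so that $\nu_\pm$ is log-concave iff $\bigl(\log(1\pm h)\bigr)''\le V''$ as measures. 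One must then show (a) that when $V''$ charges at least two points $x_1,x_2$, the continuous piecewise-affine $h$ with breakpoints at $x_1,x_2$ and small kinks form a full neighborhood of $0$ in a $4$-dimensional space of admissible perturbations, which survives the three linear constraints $\int h\,d\nu=\int g_ih\,d\nu=0$; and (b) a separate argument when $V''$ has diffuse mass. Note that the admissible set is \emph{not} linear and is strictly larger than the affine-plus-kinks family (e.g.\ $1\pm\tanh$ are both log-concave, so even a log-affine density admits non-affine perturbations), so a naive ``tangent space'' count does not by itself establish which densities survive as extreme points; fortunately the theorem only requires the non-extremality direction, for which exhibiting the explicit $4$-parameter family above suffices. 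Without that explicit construction and the correct dichotomy on $V''$, the proposal remains a plausible plan rather than a proof.
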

When a density $f$ has the form $e^{-V}$ for $V = \max \{ \phi_1, \phi_2 \}$ for $\phi_i$ affine, we will say that $f$ is two piece log-affine.

\begin{lem}
    For $X$ log-concave on $\mathbb{R}$,
    \[
        \frac{e^{2 h(X)}}{\Var(X)} \geq \inf_{Z \in \mathcal{K}} \frac{e^{2h(Z)}}{\Var(Z)},
    \]
    where $\mathcal{K}$ is the space of compactly supported log-concave variables with monotone density on this support of the form $f = e^{ - \max \{ \phi_0, \phi_1 \}}$ for $\phi_i$ affine.
\end{lem}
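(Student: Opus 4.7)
My plan is to combine the rearrangement reduction of Lemma~\ref{lem: decreasing rearrangement decreases ratio} with the extreme-point theorem of Fradelizi--Gu\'edon (Theorem~\ref{thm: fradelizi-guedon}). First I would apply Lemma~\ref{lem: decreasing rearrangement decreases ratio} to replace $X$ by $X^{\downarrow}$, reducing to the case where $X$ is log-concave with a monotone decreasing density $f = e^{-V}$ on $[0,\infty)$. Next I would truncate by $f_n = e^{-V}\mathbbm{1}_{[0,n]}/c_n$ with $c_n = \int_0^n e^{-V}$ and invoke dominated convergence (exactly as in Step~1 of the previous subsection, using integrability of $|V|e^{-V}$) to obtain $\Var(f_n)\to \Var(f)$ and $h(f_n)\to h(f)$. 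This reduces the claim to the case where $X$ is compactly supported on some interval $[0,L]$.

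\smallskip
\noindent Writing $m = \mathbb{E} X$ and $M = \mathbb{E} X^2$, set $g_1(x) = m - x$ and $g_2(x) = x^2 - M$, and let $\mathcal{P}_g$ be the space of log-concave probability measures on $[0,L]$ with $\int g_i\,\dd \mu \geq 0$ for $i=1,2$. Then the law of $X$ lies in $\mathcal{P}_g$ (with equality in both constraints), and every $\mu \in \mathcal{P}_g$ satisfies $\Var(\mu) \geq M - m^2 = \Var(X)$. The pointwise bound $f \leq f(0) \leq \sqrt{2/\Var(X)}$ recalled in Step~2 of the previous subsection, together with compactness of $[0,L]$, makes $\mathcal{P}_g$ weakly compact and the entropy functional continuous on it. Since $h$ is concave, its infimum over $\mathcal{P}_g$ coincides with that over $\conv(\mathcal{P}_g)$ and is attained at an extreme point of $\conv(\mathcal{P}_g)$. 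Theorem~\ref{thm: fradelizi-guedon} identifies this extremiser $Z^*$ as either a Dirac mass (ruled out by $\Var(Z^*) \geq \Var(X) > 0$) or a density of the form $e^{-\max\{\phi_0,\phi_1\}}$ with $\phi_0,\phi_1$ affine. In particular $h(Z^*) \leq h(X)$ and $\Var(Z^*) \geq \Var(X)$.

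\smallskip
\noindent Since $Z^*$ need not be monotone, I would then apply rearrangement one last time. The density $(Z^*)^{\downarrow}$ is log-concave monotone decreasing by the preceding Proposition, and a direct computation with $\{f^{\downarrow} > \lambda\} = \{f > \lambda\}^{\downarrow}$ shows it is again of the form $e^{-\max\{\tilde{\phi}_0,\tilde{\phi}_1\}}$: indeed, the level sets of $\max\{\phi_0,\phi_1\}$ are intervals whose length depends piecewise affinely on the threshold, so the inverse function defining $(Z^*)^{\downarrow}$ is piecewise affine in $\log\lambda$ on the support. Hence $(Z^*)^{\downarrow} \in \mathcal{K}$, and Lemma~\ref{lem: decreasing rearrangement decreases ratio} gives
\[
\frac{e^{2h((Z^*)^{\downarrow})}}{\Var((Z^*)^{\downarrow})} \leq \frac{e^{2h(Z^*)}}{\Var(Z^*)} \leq \frac{e^{2h(X)}}{\Var(X)},
\]
which yields the asserted bound. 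The main obstacle I anticipate is the functional-analytic justification of the extreme-point step: proving weak compactness of $\mathcal{P}_g$ and continuity of $h$ on it so that a minimiser exists and is legitimately a Fradelizi--Gu\'edon extreme point; the uniform pointwise bound on log-concave densities of fixed variance is precisely what supplies this control.
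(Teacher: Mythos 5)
Your proposal is correct and follows essentially the same route as the paper: truncation to $[0,L]$, the Fradelizi--Gu\'edon extreme-point theorem applied with the same constraint functions $g_1(x)=\mathbb{E}[X]-x$ and $g_2(x)=x^2-\mathbb{E}[X^2]$, the variance comparison ruling out point masses, and a final decreasing rearrangement of the two-piece log-affine extreme point. The only cosmetic difference is that the paper sidesteps the attainment issue you flag by combining Krein--Milman with concavity and upper semi-continuity of $h$ to bound $h(X)$ directly by the infimum over extreme points, rather than producing an actual minimiser.
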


\begin{proof}
    Recalling the truncation argument  it suffices to consider $X \sim \mu$ with density supported on $[0,L]$ for some $L > 0$.  Fix $X$ and take
    \begin{align*}
        g_1(x) &= \mathbb{E}[X] - x
        \\
     g_2(x) &= x^2 - \mathbb{E}[X^2].
     \end{align*}
    For $Z \sim \nu$ an extreme point of $\mathcal{P}_g$, since $Z$ is non-negative by $\int g_i d\nu \geq 0$, we have
    \[
    \Var(Z) \geq \Var(X) > 0,
    \]
    so $\nu$ is not a point mass and hence by Theorem \ref{thm: fradelizi-guedon}, $Z$ has density of the form $f = e^{- \max\{ \phi_1 , \phi_2 \}}$.  Since $X \sim \mu \in \mathcal{P}_g$  by definition, if we let $\mathcal{E}(\mathcal{P}_f)$ denote the extreme points of the convex hull of $\mathcal{P}_f$, by the Krein-Milman $\mu$ belongs to the closure of the convex hull of $\mathcal{E}(\mathcal{P}_f)$.  Now let us show that this implies that 
    \begin{align} \label{eq: entropy minimizer}
        h(X)  \geq \inf_{ Z \sim \nu \in \mathcal{E}(\mathcal{P}_g)} h(Z).
    \end{align}
    Indeed the entropy is concave and upper semi-continuous in the weak topology when restricted to compact sets  (as can be seen from the more well known fact that the relative entropy is lower semicontinuous, and on compact sets $h(X) = h(U) - D(X||U)$ where $U$ is the uniform distribution on the compact set), thus writing $\mu$ as $\lim_n \mu_n$ for a sequence of $\mu_n$ that can be expressed as convex combination of extreme points, that is $ \mu_n = \sum_{i =1}^{k_n} \lambda_n(i) \nu_{n}(i)$ for $\nu_{n}(i) \in \mathcal{E}(\mathcal{P}_g)$, we have
    \begin{align*}
        h(\mu) 
        & \geq 
        \limsup_{n} h(\mu_n) \\
        & = 
        \limsup_{n} h\left( \sum_{i =1}^{k_n} \lambda_n(i) \nu_{n}(i) \right) \\
        & \geq 
        \limsup_n \sum_{i =1}^{k_n} \lambda_n(i) h(\nu_{n}(i)) \\
        & \geq \inf_{\nu \in \mathcal{E}(\mathcal{P}_g)} h(\nu).
    \end{align*}Since every element of $\mathcal{P}_g$ has variance no smaller than $X$, it follows that 
    \[
    \frac{e^{2h(X)}}{\Var(X)} \geq \inf_{Z \sim \nu \in \mathcal{E}(\mathcal{P}_g)} \frac{e^{2h(Z)}}{\Var(Z)}.
    \]
    Consider $Z^\downarrow$ for $Z \in \mathcal{E}(\mathcal{P}_g)$, and applying Lemma \ref{lem: decreasing rearrangement decreases ratio}, we have
    \[
        \frac{e^{2h(X)}}{\Var(X)} \geq \inf_{Z \sim \nu \in \mathcal{E}(\mathcal{P}_g)} \frac{e^{2h(Z^{\downarrow})}}{\Var(Z^\downarrow)}
    \]
    Direct computation, shows that if $Z$ has a two-piece log-affine density, then $Z^\downarrow$ does as well, completing the proof.
\end{proof}

\section{Proof of Theorem \ref{thm:main}}

The proof of Theorem \ref{thm:main} goes in 3 steps. The above preliminaries first allow us to reduce to a two-piece affine function which, after re-parametrisation, amounts to proving that a 3 variables function $G(c,x,y)$ is non-negative.  As a second technical step we remove an exponential factor to reduce to a simpler function that we finally study by hand.

\subsection{Three-point inequality} From the previous section we can assume that $f=e^{-V}$, where $V$ is two-piece affine and non-decreasing on an interval $[0,L]$. In fact by scale invariance   we can assume that we have the following parametrization of our function
\[
	g(t) =  e^{- \frac{t}{a}}\mathbbm{1}_{[-ax,0]}(t) + e^{-\frac{t}{b}} \mathbbm{1}_{[0,-yb]}(t), \qquad f=\frac{g}{c}, \qquad c = \int g = a(e^x-1)-b(e^y-1),
\]
where
\[
	a \geq b >0, \qquad x\geq 0, \qquad y \leq 0 .
\]
Then $f$ is a probability density and we have
\[
	\int x g(x) \dd x = a^2 \left(e^x (1-x)-1\right)-b^2 \left(e^y (1-y)-1\right)
\]
and
\[
	\int x^2 g(x) \dd x = a^3 \left(e^x \left(x^2-2 x+2\right)-2\right)-b^3 \left(e^y \left(y^2-2 y+2\right)-2\right).
\]
Also
\[
	- \int g \log g = a \left(e^x (1-x)-1\right)-b( e^y (1-y)-1).
\]
The inequality of Theorem \ref{thm:main} we want to prove is
\[
	-\int f \log f \geq \frac12 \log \var(f) + 1 .
\]
This is
\[
	e^{-2 \int f \log f} \geq e^2 \var(f).
\]
In terms of $g$
\[
	e^{-2} e^{-2 \int \frac{g}{c} \log \frac{g}{c}} \geq  \frac{1}{c}\int x^2 g(x) \dd x -  \frac{1}{c^2} \left(\int x g(x) \dd x \right)^2.
\]
We have
\[
e^{-2} e^{-2 \int \frac{g}{c} \log \frac{g}{c}} = e^{-2} e^{-\frac{2}{c} \int g \log g + 2 \log c}  = e^{-2} c^2 e^{-\frac{2}{c} \int g \log g},
\]
so after multiplying both sides by $c^2$ we want to prove
\[
	e^{-2} c^4 e^{-\frac{2}{c} \int g \log g} \geq  c\int x^2 g(x) \dd x -  \left(\int x g(x) \dd x \right)^2.
\]
Observe that
\begin{align*}
    e^{-2}  e^{-\frac{2}{c} \int g \log g} 
    & = 
    \exp\left( 2 \frac{ a \left(e^x (1-x)-1\right)-b( e^y (1-y)-1)}{a(e^x-1)-b(e^y-1)} -2\right) \\
    & = 
    \exp\left( 2 \frac{ -a x e^x +b y e^y}{a(e^x-1)-b(e^y-1)} \right).
\end{align*}
Our goal is therefore to prove
\[
	c^4 \exp\left( 2 \frac{ -a x e^x +b y e^y}{a(e^x-1)-b(e^y-1)} \right) \geq  c \int x^2 g(x) \dd x -  \left(\int x g(x) \dd x \right)^2.
\]
Equivalently
\begin{align*}
	& \left( a(e^x-1)-b(e^y-1) \right)^4 \exp\left( -2 \frac{ a e^x x -b  e^y y}{a(e^x-1)-b(e^y-1)} \right) \\
	&  \qquad \geq  \left( a(e^x-1)-b(e^y-1) \right) \left( a^3 \left(e^x \left(x^2-2 x+2\right)-2\right)-b^3 \left(e^y \left(y^2-2 y+2\right)-2\right) \right) \\
	& \qquad \qquad  -  \left(  a^2 \left(e^x (1-x)-1\right)-b^2 \left(e^y (1-y)-1\right) \right)^2.
\end{align*}

The inequality is invariant under $(a,b) \to (ta,tb)$ and therefore we can assume that $b=1$ and $a \geq 1$.
Let us define the function
\begin{align*}
	G(c,x,y) = & \left( c(e^x-1)-(e^y-1) \right)^4 \exp\left( -2 \frac{ c e^x x -  e^y y}{c(e^x-1)-(e^y-1)} \right) \\
	&  \qquad -  \left( c(e^x-1)-(e^y-1) \right) \left( c^3 \left(e^x \left(x^2-2 x+2\right)-2\right)- \left(e^y \left(y^2-2 y+2\right)-2\right) \right) \\
	& \qquad \qquad  +  \left(  c^2 \left(e^x (1-x)-1\right)- \left(e^y (1-y)-1\right) \right)^2.
\end{align*}
All together, we reduced the proof of Theorem \ref{thm:main} to  proving that $G(c,x,y) \geq 0$ for $x \geq 0$, $y \leq 0$ and $c \geq 1$.

\subsection{Estimating the exponent}

We observe that
\[
	\frac{ce^xx-e^yy}{c(e^x-1)-(e^y-1)}- x = \frac{(x-y)e^y+x(c-1)}{c(e^x-1)-(e^y-1)} \in [0,1].
\]
Indeed, the extremal values of the right hand side are attained for $c=0,1$ and are therefore equal $\frac{(x-y)e^y}{e^x-e^y}=\frac{(x-y)}{e^{x-y}-1}$ and $\frac{x}{e^x-1}$. 
Since clearly $\frac{\theta}{e^\theta-1} \in [0,1]$ for any $\theta >0$ the claim follows.

Now, on the interval $[0,2]$ we have the bound $e^{-x} \geq 1-x+\frac12 x^2 -\frac16 x^3 +\frac{7}{240} x^4$. The inequality holds for $x=2$ and thus it is enough to show that the derivative of $g(x)=e^{-x} -(1-x+\frac12 x^2 -\frac16 x^3 +\frac{7}{240} x^4)$ is positive or has sign pattern $(+,-)$. It is clear that if $f:\R_+ \to \R$ satisfies $f(0)=0$ and $f'$ is positive or has sign pattern $(+,-)$ then $f$ itself is positive or has sign pattern $(+,-)$. Using this argument three times and observing that $g^{(4)}(x)=e^{-x}-\frac{7}{10}$ has sign pattern $(+,-)$ finishes the proof. Using this bound we obtain
\[
	e^{-2\frac{ce^xx-e^yy}{c(e^x-1)-(e^y-1)}} \geq e^{-2x}\left( 1-L+\frac12 L^2 -\frac16 L^3 +\frac{7}{240} L^4 \right)
 \]
 with 
 $L(c,x,y)= 2\frac{(x-y)e^y+x(c-1)}{c(e^x-1)-(e^y-1)}$. 
 This gives a lower bound on $G(c,x,y)$ of the form
\[
	15 e^x G(c,x,y) \geq P_0(x,y)+(c-1)P_1(x,y) + (c-1)^2P_2(x,y) + (c-1)^3P_3(x,y) + (c-1)^4P_4(x) 
\]
where $P_0,\dots, P_4$ are explicitely given and studied in the next sections.

To prove Theorem \ref{thm:main} it is therefore
sufficient to prove that $P_i(x,y) \geq 0$ for $0 \leq i \leq 4$ which we now show.

\subsection{Positivity of $P_i(x,y)$, $i=0,1,\dots, 4$}

To prove that the $P_i$'s are positive is a tedious but somehow simple exercise. The strategy that we adopt rely on a series expansion in the variable $x$. Some details are left to the reader for shortness.

We start with the simplest case $P_4$ that involves only the variable $x$.

\subsubsection{Positivity of $P_4(x,y)$}
We have
\begin{align*}
	P_4(x) & = 7 x^4+20 x^3+30 x^2 + 30 x+15 +15 e^{3 x} \left(x^2-2 x-2\right) +15 e^{2 x} \left(2 x^2+6 x+5\right) \\ 
	& \quad   -10 e^x \left(2 x^3+6 x^2+9 x+6\right).
\end{align*}
Expanding in $x$ we write $P_4(x)= \sum_{n \geq 0} \frac{a_n}{n!} x^n$, where
\[
	a_n = -10 \left(2 n^3+7 n+6\right)+15\cdot 2^{n-1} \left(n^2+5 n+10\right)+5\cdot 3^{n-1} (n-9) (n+2)   , \qquad n \geq 5
\] 
and $a_0=a_1=a_2=a_3=0$ and $a_4=\frac34$. It is easy to check that $a_n \geq 0$ for all $n \geq 0$, as for large $n$ the term $5 \cdot 3^{n-1} n^2$ dominates and for small $n$ the inequality can be checked directly.  

\subsubsection{Positivity of $P_0(x,y)$}
We have 
\begin{align*}
	& e^{-y}P_0(x,y)  = \\
 & \qquad 15 e^{2 x+y} \left(2 x^2+x (6-4 y)+2 y^2-6 y+5\right)+15 e^{3 x} \left(x^2-2 x (y+1)+y^2+2 y-2\right) \\
	& \qquad -10 e^{x+2 y} \left(2 x^3-6 x^2 (y-1)+3 x \left(2 y^2-4 y+3\right)-2 y^3+6 y^2-9 y+6\right) \\
	& \qquad +e^{3 y} \left(7 x^4-4 x^3 (7 y-5)+6 x^2 \left(7 y^2-10 y+5\right)+x \left(-28 y^3+60 y^2-60 y+30\right) \right) \\
	& \qquad + e^{3y}\left(7 y^4-20 y^3+30 y^2-30 y+15\right).
\end{align*}
We Taylor expand the right hand side in the form $\sum_{n \geq 0} \frac{f_n(y)}{n!} x^n$, where
\[
	f_n(y) = a_0 + a_1y+a_2 y^2 + e^y(b_0+b_1y+b_2y^2) + e^{2y}(c_0+c_1y+c_2y^2+c_3y^3), \qquad n \geq 5,
\]
with
\[
	a_0(n) = 3^{n-1} \left(5 n^2-35 n-90\right), \quad a_1(n)=3^n \cdot 10 \cdot (3- n), \quad a_2(n)=5 \cdot 3^{n+1}
\]
\[
	b_0(n)= 15 \cdot 2^{n-1} \left(n^2+5 n+10\right), \quad b_1(n)= -15 \cdot 2^{n+1} (n+3), \quad b_2(n)=15 \cdot  2^{n+1},
\]
\[
	c_0(n)=-10 \left(2 n^3+7 n+6\right), \quad c_1(n)=30 \left(2 n^2+2 n+3\right), \quad c_2(n)=-60 (n+1), \quad c_3(n)=20. 
\]
The terms with $a_1, a_2, b_0, b_1, b_2$ are clearly positive. Using crude bounds $e^{2k}|y|^l \leq 1$ for $k=1,2$ and $l=0,1,2,3$  we arrive at $f_n(y) \geq a_0+c_0-c_1+c_2-c_3$. The leading term in this expression is $5 \cdot 3^{n-1} n^2$ and it is easy to check that the expression is positive for $n \geq 10$.

The inequalities $f_n(y) \geq 0$ for $0 \leq n \leq 9$ can be proved by Taylor expanding $e^{2t}f_n(-t)$ and checking that the coefficients are nonnegative. We leave the details to the reader.

\subsubsection{Positivity of $P_1(x,y)$}

We have 
\begin{align*}
& P_1(x,y)  =15 e^{2 x+y} \left(4 x^2-4 x (y-3)-y^2-8 y+8\right)+60 e^{2 (x+y)} \left(x^2+x (3-2 y)+y^2-3 y+3\right) \\
& \quad +15 e^{3 x+y} \left(3 x^2-4 x (y+2)+y^2+8 y-8\right)+15 e^{3 x} x^2 \\
& \quad -30 e^{x+2 y} \left(2 x^3+x^2 (6-4 y)+x \left(2 y^2-8 y+9\right)+2 \left(y^2-3 y+3\right)\right) \\
& \quad  -10 e^{x+3 y} \left(2 x^3-6 x^2 (y-1)+3 x \left(2 y^2-4 y+3\right)-2 y^3+6 y^2-9 y+6\right) \\
& \quad  -2 e^{3 y} \left(-14 x^4+x^3 (42 y-40)-6 x^2 \left(7 y^2-15 y+10\right)+2 x \left(7 y^3-30 y^2+45 y-30\right) \right) \\
& \quad -10 e^{3 y} \left(2 y^3-6 y^2+9 y-6\right) .
\end{align*}
We Taylor expand the right hand side in the form $\sum_{n \geq 0} \frac{f_n(y)}{n!} x^n$, where
\[
	f_n(y) = a_0 +  e^y(b_0+b_1y+b_2y^2) + e^{2y}(c_0+c_1y+c_2y^2) + e^{3y}(d_0+d_1 y+d_2 y^2 + d_3 y^3), \qquad n \geq 5,
\]
with
\[
	a_0(n)=5 \cdot 3^{n-1} (n-1) n, \quad  b_0(n)= 5 \cdot 3^n \left( n^2-9 n-24\right)+5 \cdot 2^n \left(3 n^2+15 n+24\right), \quad 
\]
\[
	b_1(n)=-20 \cdot 3^n (n-6)-30 \cdot 2^n (n+4), \quad b_2(n)=15 \left(3^n-2^n\right)
\]
\[
	c_0(n)=15\cdot 2^n \left(n^2+5 n+12\right)-30 \left(2 n^3+7 n+6\right), \quad c_1(n)=60 \left(2 n^2+2 n+3\right)-15\ 2^{n+2} (n+3)
\]
\[
	c_2(n)=15\cdot 2^{n+2}-60(n+1), \quad d_0(n)=-10 \left(2 n^3+7 n+6\right), \quad d_1(n)=30 \left(2 n^2+2 n+3\right), 
\]
\[
d_2(n)=-60 (n+1), \quad d_3(n)=20.
\]
We always have $a_0,b_2,c_0,c_2,d_1,d_3>0$ and $b_1, c_1, d_0, d_2<0$, whereas $b_0>0$ only for $n \geq 11$. For $n \geq 11$ we therefore have $f_n(y) \geq a_0 +d_0-d_1+d_2-d_3>0$, the dominating term being $a_0$. For $9 \leq n \leq 11$ we get $f_n(y) \geq a_0 +b_0+d_0-d_1+d_2-d_3>0$.  

The inequalities $f_n(y) \geq 0$ for $0 \leq n \leq 8$ can be proved by Taylor expanding $e^{3t}f_n(-t)$ and checking that the coefficients are nonnegative. 

\subsubsection{Positivity of $P_2(x,y)$}
We have
\begin{align*}
	& \frac13 P_2(x,y)  = 10 e^{2 (x+y)} \left(x^2+x (3-2 y)+y^2-3 y+3\right)+10 e^{2 x+y} \left(4 x^2-4 x (y-3)-7 y+10\right) \\
	& \quad +10 e^{2 x} \left(x^2+3 x+2\right)+5 e^{3 x} \left(3 x^2-2 x-4\right) \\
	& \quad  -10 e^{x+2 y} \left(2 x^3+x^2 (6-4 y)+x \left(2 y^2-8 y+9\right)+2 \left(y^2-3 y+3\right)\right) \\
	& \quad  -10 e^{x+y} \left(2 x^3-2 x^2 (y-3)+x (9-4 y)-3 y+6\right) \\
	& \quad +2 e^{2 y} \left(7 x^4+x^3 (20-14 y)+x^2 \left(7 y^2-30 y+30\right)+10 x \left(y^2-3 y+3\right)+5 \left(y^2-3 y+3\right)\right) \\
	& \quad  +5 (x-4) e^{3 x+y} (3 x-2 y+2).
\end{align*}
We Taylor expand the right hand side in the form $\sum_{n \geq 0} \frac{f_n(y)}{n!} x^n$, where
\[
	f_n(y) = a_0 + e^y(b_0+b_1y) + e^{2y}(c_0+c_1y+c_2y^2), \qquad n \geq 5,
\]
with
\[
	a_0(n)=15\cdot 2^{n-1} \left(n^2+5 n+8\right)+5\cdot 3^n \left(n^2-3 n-12\right), 
\]
\[
 b_0(n)=-30 \left(2 n^3+7 n+6\right)+15\cdot 2^{n+1} \left(n^2+5 n+10\right)+5\cdot 3^n \left(n^2-11 n-24\right)
\]
\[
	b_1(n) = -10\cdot 3^n (n-12)-15\cdot 2^{n+1} (2 n+7)+30(2n^2+2n+3)
\]
\[
	c_0(n)=15\cdot 2^{n-1} \left(n^2+5 n+12\right)-30 \left(2 n^3+7 n+6\right), 
\] 
\[
c_1(n)=60 \left(2 n^2+2 n+3\right)-15\cdot 2^{n+1} (n+3), \qquad c_2(n)=15\cdot 2^{n+1}-60(n+1).
\]
We always have $a_0,c_0,c_2>0$ and $c_1<0$, whereas $b_0>0$ only if $n \geq 13$ and $b_1<0$ only if $n \geq 11$. For $n \geq 13$ we have $f_n(y) \geq a_0-b_1>0$. Here the dominating terms is $5 \cdot 3^n n^2$ from $a_0$. For $7 \leq n \leq 12$ we have $f_n(y) \geq a_0-|b_0|-|b_1|>0$. 

When $0 \leq n \leq 6$ it is enough to Taylor expand $e^{2t}f_n(-t)$ and check that the coefficients are nonnegative. 

\subsubsection{Positivity of $P_3(x,y)$}

We have
\begin{align*}
	P_3(x,y)  & = 15 e^{3 x+y} \left(x^2-4 x+2 y-2\right)+30 e^{2 x+y} \left(2 x^2-2 x (y-3)-3 y+5\right) \\
 & \quad  +30 e^{2 x} \left(2 x^2+6 x+5\right)
 +15 e^{3 x} \left(3 x^2-4 x-6\right)\\
	& \quad -30 e^{x+y} \left(2 x^3-2 x^2 (y-3)+x (9-4 y)-3 y+6\right)   -10 e^x \left(2 x^3+6 x^2+9 x+6\right) \\
	& \quad +e^y \left(28 x^4+x^3 (80-28 y)-60 x^2 (y-2)-60 x (y-2)-30 (y-2)\right),
\end{align*}
where
\[
	f_n(y) = a_0 + e^y(b_0+b_1y), \qquad n \geq 5.
\]
with
\[
	a_0(n) = -10 \left(2 n^3+7 n+6\right)+15\cdot 2^n \left(n^2+5 n+10\right)+5\cdot 3^n \left(n^2-5 n-18\right),
\]
\[
	b_0(n)= -30 \left(2 n^3+7 n+6\right)+15\cdot 2^n \left(n^2+5 n+10\right)+5\cdot 3^{n-1} \left(n^2-13 n-18\right)
\]
\[
	b_1(n)= 30 \left(2 n^2+2 n+3\right)-15\cdot 2^{n+1} (n+3)+10\cdot 3^{n+1}.
\]
We always have $a_0, b_1>0$ whereas $b_0>0$ only for $n \geq 14$. For $n \geq 14$ we get $f_n(y) \geq a_0-b_1>0$, the dominating term being $5 \cdot 3^n n^2$ from $a_0$. For $6 \leq n \leq 13$ we write $f_n(y) \geq a_0-|b_0|-|b_1| >0$. 

When $0 \leq n \leq 5$ it is enough to Taylor expand $e^{t}f_n(-t)$ and check that the coefficients are nonnegative. 

\section*{Acknowledgements}
The authors express their gratitude to Krzysztof Oleszkiewicz for lengthy discussions, suggestions, and support for this project, without which this paper would not exist, the Institut Henri Poincaré and Université Paris Nanterre, where the majority of this work was completed, for their warm hospitality, and for generously sharing their thoughts, motivations, and suggestions about this problem, Sergey Bobkov, Mokshay Madiman, and Arnaud Marsiglietti.

\bibliographystyle{plain}
\bibliography{bibibi}

\end{document}